\documentclass[12pt,a4paper]{amsart}
%-------------------------------------------------------------------------------------------------------------------------------------------
%---------------------------------------------------------PACKAGES---------------------------------------------------------------------
%-------------------------------------------------------------------------------------------------------------------------------------------
%
%~~~~~~~~~~~~~~~~~~~~~~~~~~~~~~Font packages~~~~~~~~~~~~~~~~~~~~~~~~~~~~~~
\usepackage[italian, UKenglish]{babel}
\usepackage[latin9]{inputenx} %inputenx is an update of inputenc
\usepackage{fontenc}
\usepackage[mathscr]{eucal}
\usepackage{indentfirst}
\usepackage{verbatim}
\usepackage[margin=2.9cm]{geometry}
\usepackage[colorinlistoftodos]{todonotes}
\usepackage[foot]{amsaddr}
%~~~~~~~~~~~~~~~~~~~~~~~~~~~~~~Graphic packages~~~~~~~~~~~~~~~~~~~~~~~~~~~~~~
\usepackage{xcolor}
\usepackage[framemethod=tikz]{mdframed} % To highlight paragraphs
%~~~~~~~~~~~~~~~~~~~~~~~~~~~~~~Math packages~~~~~~~~~~~~~~~~~~~~~~~~~~~~~~
\usepackage{amsfonts}
\usepackage{amsthm}
\numberwithin{equation}{section}
\usepackage{amsmath}
\usepackage{amscd}
\usepackage{amssymb}
\usepackage{mathtools}
\usepackage{mathabx}
\usepackage{mathptmx}%Changes the style of the paper
\usepackage{hyperref} %I saw the advice to put it last
%-------------------------------------------------------------------------------------------------------------------------------------------
%---------------------------------------------------------NEW COMMANDS-------------------------------------------------------------
%-------------------------------------------------------------------------------------------------------------------------------------------
\makeatletter
\def\@tocline#1#2#3#4#5#6#7{\relax
  \ifnum #1>\c@tocdepth % then omit
  \else
    \par \addpenalty\@secpenalty\addvspace{#2}%
    \begingroup \hyphenpenalty\@M
    \@ifempty{#4}{%
      \@tempdima\csname r@tocindent\number#1\endcsname\relax
    }{%
      \@tempdima#4\relax
    }%
    \parindent\z@ \leftskip#3\relax \advance\leftskip\@tempdima\relax
    \rightskip\@pnumwidth plus4em \parfillskip-\@pnumwidth
    #5\leavevmode\hskip-\@tempdima
      \ifcase #1
       \or\or \hskip 1em \or \hskip 2em \else \hskip 3em \fi%
      #6\nobreak\relax
    \dotfill\hbox to\@pnumwidth{\@tocpagenum{#7}}\par
    \nobreak
    \endgroup
  \fi}
\makeatother
%

% Define distances for bordering -  Command on their own

% Draw background

\newcommand{\mycomment}[1]{%
}% -------------------

\newcommand*{\norm}[1]{\left\lVert#1\right\rVert}
\newcommand*{\vertiii}[1]{{\left\vert\kern-0.25ex\left\vert\kern-0.25ex\left\vert #1 \right\vert\kern-0.25ex\right\vert\kern-0.25ex\right\vert}}

\makeatletter
\newcommand\mathcircled[1]{%
  \mathpalette\@mathcircled{#1}%
}
\makeatother
%\newcommand* is used for "short" definitions, is saves memory and a little bit of computational time.
%
%Below: to costum Theorem numeration

\providecommand{\customgenericname}{}
\newcommand{\newcustomtheorem}[2]{%
  \newenvironment{#1}[1]
  {%
   \renewcommand\customgenericname{#2}%
   \renewcommand\theinnercustomgeneric{##1}%
   \innercustomgeneric
  }
  {\endinnercustomgeneric}
}
\newcustomtheorem{customprop}{Proposition}
\newcustomtheorem{customcor}{Corollary}
\newcustomtheorem{customdefin}{Definition}
% To get the affiliation in the first page for JGA
\makeatletter
\renewcommand{\email}[2][]{%
  \ifx\emails\@empty\relax\else{\g@addto@macro\emails{,\space}}\fi%
  \@ifnotempty{#1}{\g@addto@macro\emails{\textrm{(#1)}\space}}%
  \g@addto@macro\emails{#2}%
}
\makeatother
%-------------------------------------------------------------------------------------------------------------------------------------------
%---------------------------------------------------------COMMANDS--------------------------------------------------------------------
%-------------------------------------------------------------------------------------------------------------------------------------------
 
\allowdisplaybreaks % This is a command on its own
\setcounter{secnumdepth}{3}
\setcounter{tocdepth}{3}
%-------------------------------------------------------------------------------------------------------------------------------------------
%---------------------------------------------------------DECLARATIONS---------------------------------------------------------------
%-------------------------------------------------------------------------------------------------------------------------------------------
\DeclareMathAlphabet{\mathcal}{OMS}{cmsy}{m}{n}

\DeclareMathOperator{\spn}{span}

\theoremstyle{plain}%plain/definition/remark
\newtheorem{theor}{Theorem}[section] %The numbering changes section by section (parent)
\newtheorem{lem}[theor]{Lemma} %The numbering is reset when the numbering of [theor] is reset (child)
\newtheorem{cor}[theor]{Corollary}
\newtheorem{prop}[theor]{Proposition}
\newtheorem*{cor*}{Corollary}
\newtheorem*{prop*}{Proposition}
\newtheorem*{lem*}{Lemma}
\theoremstyle{definition}
\newtheorem{defin}[theor]{Definition}
\newtheorem*{defin*}{Definition}

\newtheorem{ex}[theor]{Example}

\newtheorem{no}[theor]{Notation}

%-------------------------------------------------------------------------------------------------------------------------------------------
%---------------------------------------------------------DOCUMENT--------------------------------------------------------------------
%-------------------------------------------------------------------------------------------------------------------------------------------

\begin{document}
\title{Horizontal and Straight Triangulation on Heisenberg Groups}
\author[G. Canarecci]{Giovanni Canarecci}
\address{University of Helsinki \\ Department of Mathematics and Statistics \\
Helsinki, Finland} 
\email{giovanni.canarecci@helsinki.fi}
\keywords{Heisenberg group, horizontal, Sub-Riemannian geometry, triangulation, simplexes} 
\begin{abstract} 
%This paper aims to show that there exists a triangulation of the Heisenberg group $\mathbb{H}^n$ into singular simplexes with regularity properties on both the low-dimensional and high-dimensional layers. %
%For low dimensions, we request our simplexes to be horizontal while, for high dimensions, we define a notion of straight simplexes using exponential and logarithmic maps. %
%A triangulation with such simplexes is first constructed on a polyhedral structure and then extended to the whole Heisenberg group.
This paper aims to show that there exists a triangulation of the Heisenberg group $\mathbb{H}^n$ into singular simplexes with regularity properties on both the low-dimensional and high-dimensional layers. %
For low dimensions, we request our simplexes to be horizontal while, for high dimensions, we define a notion of straight simplexes using exponential and logarithmic maps and we require our simplexes to have high-dimensional straight layers. %
A triangulation with such simplexes is first constructed on a general polyhedral structure and then extended to the whole Heisenberg group. %
In this paper we also provide some explicit examples of grid and triangulations.
\end{abstract}
\maketitle
\tableofcontents

%-----------------------------------------------------------------------------------------------------------------------------------------------------
%-----------------------------------------------------------------------------------------------------------------------------------------------------
%-----------------------------------------------------------------------------------------------------------------------------------------------------

\section*{Acknowledgments} 
I would like to thank my adviser, university lecturer Ilkka Holopainen for his reviews. %
I also want to thank professor Pierre Pansu for the stimulating discussions and suggestions.

%-----------------------------------------------------------------------------------------------------------------------------------------------------
%-----------------------------------------------------------------------------------------------------------------------------------------------------
%-----------------------------------------------------------------------------------------------------------------------------------------------------

\section*{Introduction}

The aim of this paper is to show that there exists a triangulation of the Heisenberg group $\mathbb{H}^n$ into singular simplexes with regularity properties on both the low-dimensional and high-dimensional layers. %\\ %
%
%
% Preliminaries
The Heisenberg group $\mathbb{H}^n$, $n \geq 1$, %
with a group operation *, %
is the $(2n+1)$-dimensional % manifold and %
connected, simply connected and nilpotent Lie group identified through exponential coordinates with $\mathbb{R}^{2n+1}$. %
Such a group has two important automorphisms playing a role in its geometry: left translations (with respect to a fixed point) %
and anisotropic dilations. 
Additionally, the Heisenberg group is a Carnot group of step $2$ with Lie algebra $\mathfrak{h} = \mathfrak{h}_1 \oplus \mathfrak{h}_2$.  %
The \textit{horizontal} layer $\mathfrak{h}_1$ has a standard orthonormal basis of left invariant vector fields,  
$ X_j=\partial_{x_j} -\frac{1}{2} y_j \partial_t$ and $Y_j=\partial_{y_j} +\frac{1}{2} x_j \partial_t$ for $ j=1,\dots,n$,     %
which hold the core property that $[X_j, Y_j] = \partial_t=:T $ for each $j$. %, where 
$T$ alone spans the second layer $\mathfrak{h}_2$ and is called the \textit{vertical} direction. By definition, the horizontal subbundle changes inclination at every point, allowing movement from any point to any other point following only horizontal paths; this allows to define the Carnot--Carath{\'e}odory distance $d_{cc}$, measured along curves whose tangent vector fields are horizontal. %\\ %
%
%
% Second Part: AN EXAMPLE OF GRID
In Section \ref{section:example-grid} we show an example of a grid on the Heisenberg group made by cubes of the kind
\begin{align*}
 Q_{ \epsilon p, \epsilon} &:=   \left \{   w \in \mathbb{H}^n   \ / \ w=q+ \epsilon p \text{ with } q \in \left [ 0,\epsilon \right ]^{2n+1} \right \}.
\end{align*}
with $p \in \mathbb{H}^n$ and $\epsilon > 0$ small. %
The faces and subfaces of these cubes enjoy some limited $\mathbb{H}$-regularity properties:

\begin{prop*}[\ref{prop:1-codim-grid} - \ref{prop:faces_of_faces}]
The interior of each face of $Q_{\epsilon p, \epsilon}$ is a $\mathbb{H}$-regular $1$-codimensional  surface and  each face is a $\mathbb{H}$-regular $1$-codimensional  surface %
 unless the face both is (Euclidean) perpendicular to the $t$-axis and intersects the $t$-axis itself.
If $n > 1$, the interior of each subface of $Q_{\epsilon p, \epsilon}$ is a $\mathbb{H}$-regular $2$-codimensional  surface and each subface is a $\mathbb{H}$-regular $2$-codimensional surface %
 unless the face both is (Euclidean) perpendicular to the $t$-axis and intersects the $t$-axis itself.
\end{prop*}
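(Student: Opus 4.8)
The plan is to exhibit, for each face and subface, an explicit $C^1_{\mathbb{H}}$ defining map and then read off exactly where its horizontal differential loses rank. Recall that a set $S\subseteq\mathbb{H}^n$ is an $\mathbb{H}$-regular $k$-codimensional surface near a point $q$ if, on a neighbourhood of $q$, one can write $S=\{f=0\}$ for some $f=(f_1,\dots,f_k)\in C^1_{\mathbb{H}}$ whose horizontal Jacobian, the $k\times 2n$ matrix with entries $X_i f_\ell$ and $Y_i f_\ell$, has rank $k$ throughout that neighbourhood; for $k=1$ this just says $\nabla_{\mathbb{H}}f\neq 0$, and $k=2$ needs $n>1$ for the notion to be meaningful. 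Since $Q_{\epsilon p,\epsilon}$ is a Euclidean cube, each face lies in a coordinate hyperplane $\{\xi=a\}$ with $\xi$ one of $x_1,\dots,x_n,y_1,\dots,y_n,t$, and each subface in the intersection of two such hyperplanes; thus it suffices to test the affine functions $\xi-a$, each of which is smooth and so automatically $C^1_{\mathbb{H}}$.

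First I would dispose of everything that does not involve $t$. For a face in $\{x_j=a\}$ take $f=x_j-a$: then $X_j f\equiv 1$ while every other horizontal derivative vanishes, so $\nabla_{\mathbb{H}}f$ is a nonzero constant and the face is $\mathbb{H}$-regular at each of its points; the case $\{y_j=a\}$ is identical with $Y_j$ in place of $X_j$. For a subface cut out by $\{\xi=a\}\cap\{\xi'=b\}$ with $\xi,\xi'$ among the $x_\bullet,y_\bullet$, the two rows of the horizontal Jacobian of $(\xi-a,\,\xi'-b)$ are two distinct vectors of the standard basis of $\mathbb{R}^{2n}$ — this includes the index-coincident case $\xi=x_j$, $\xi'=y_j$, whose rows are $e_j$ and $e_{n+j}$ — hence the rank is $2$ everywhere and the subface is $\mathbb{H}$-regular throughout; here $n>1$ is used only to make $2$-codimensional regularity a meaningful statement.

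The real content is in the pieces perpendicular to the $t$-axis, where one meets the mildly counter-intuitive fact that the ``vertical'' hyperplane $\{t=c\}$ is nevertheless $\mathbb{H}$-regular away from the centre $\{x=y=0\}$ of the group. Taking $f=t-c$ one computes $X_i f=-\tfrac12 y_i$, $Y_i f=\tfrac12 x_i$, so $\nabla_{\mathbb{H}}f=\tfrac12(-y_1,\dots,-y_n,x_1,\dots,x_n)$, which vanishes exactly on the $t$-axis; hence a face in $\{t=c\}$ is $\mathbb{H}$-regular at every point off the $t$-axis, so the whole closed face is an $\mathbb{H}$-regular hypersurface precisely when it misses the $t$-axis, while its relative interior is $\mathbb{H}$-regular with no exception, the $t$-axis being forced, by how the cube's faces sit in the grid, to meet such a face at most along its boundary. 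For a subface $\{x_j=a\}\cap\{t=c\}$ — the case with $y_j$ in place of $x_j$ being symmetric — the rows of the horizontal Jacobian of $(x_j-a,\,t-c)$ are $e_j$ and $\tfrac12(-y,x)$, which are proportional exactly on the coordinate plane $\{x_1=\dots=x_n=0,\ y_i=0 \text{ for } i\neq j\}$; this plane contains the $t$-axis, the rank is $2$ off it, and since at most one of a subface's two defining hyperplanes can be $\{t=c\}$ the case list is complete, the interior and whole-subface conclusions following just as in codimension $1$.

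The main obstacle is less any single computation than the combination of (a) keeping the case analysis tidy, (b) the observation that $\nabla_{\mathbb{H}}(t-c)$ is nonzero off the $t$-axis, so that level sets of $t$ — which one might naively expect to misbehave everywhere — are genuinely $\mathbb{H}$-regular there, and (c) identifying the degeneracy locus of the horizontal differential (the $t$-axis in codimension $1$, a coordinate $2$-plane through it in codimension $2$) and checking that, being a coordinate subspace through the origin, it can touch a face or subface of $Q_{\epsilon p,\epsilon}$ only along its boundary. This last point is exactly what makes the ``interior'' assertions hold unconditionally, while the exception for whole faces and subfaces is precisely the stated requirement that the face be (Euclidean) perpendicular to, and meet, the $t$-axis.
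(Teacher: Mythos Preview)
Your approach is essentially the paper's: both pick the obvious affine defining functions, compute the horizontal gradients, and separate the cases according to whether $t$ is among the frozen coordinates. One point worth flagging: in codimension~$2$ you correctly identify the degeneracy locus of $(x_j-a,\,t-c)$ as the coordinate $2$-plane $\{x=0,\ y_i=0\ (i\neq j)\}$, whereas the paper's proof asserts that $\nabla_{\mathbb H}h^1\wedge\nabla_{\mathbb H}h^2$ vanishes only on the $t$-axis. Your sharper description does not affect the interior claim --- for $n>1$ the $2$-plane still meets the subface only along its boundary, since at least one of the remaining free coordinates is forced to an endpoint --- but it shows that the stated ``unless'' condition for the \emph{closed} subface is slightly loose in both the paper and your final sentence: a subface of type $\{x_j=0,\,t=c\}$ can meet the degeneracy $2$-plane (and hence carry characteristic boundary points) without meeting the $t$-axis, e.g.\ when the $y_j$-interval is bounded away from~$0$.
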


Nevertheless, this grid is limited because we cannot choose vertices arbitrarily and we are  constrained by the side-length of the faces. Another limit is that, a priori, we do not know whether this grid can support some regularity at each dimension, or get associated with a triangulation of the space $\mathbb{H}^n$. %\\
%
%
% Third part: TRIANGULATION OF THE HEISENBERG GROUP
In Section \ref{section:triangulation} we present a triangulation of the Heisenberg group with some regularity properties. %
The main inspiration for this section is the preprint from Balogh, Kozhevnikov and Pansu \cite{BKP} which, in return, relies on Gromov \cite{GROMOV}.  %
%
% 3.1
Specifically, in Subsection \ref{subsec:simplexes} we define a \emph{singular} $k$\emph{-simplex} as a continuous map $\sigma^k$:
$$
\sigma^k : \Delta^k \to \mathbb{H}^n,
$$
where $\Delta^k$ is the standard $k$\emph{-simplex} in $\mathbb{R}^{k+1}$; %
then, if $k>1$, we define its boundary as 
$$ 
\partial \sigma^k := \sum_{i=0}^{k} (-1)^i \sigma_i^{k-1},
$$
where $\sigma_i^{k-1}$ is the singular $(k-1)$-simplex  %
% restriction of $\sigma^k$ to $\Delta^{k-1}$ 
that acts as $ \sigma^k$ on the $i^{th}$ face of $ \Delta^{k}$. %
%
%
% 3.2
In Subsection \ref{subsec:straight-horizontal} we introduce %
\emph{straight} $k$\emph{-simplexes} $\sigma_{p_0, \dots, p_k} : \Delta^k \to \mathbb{H}^n$ (Definition \ref{defin:straight_simplexconstr}) and \emph{horizontal} $k$\emph{-simplexes} (Definition \ref{defin:horizontal_simplexes}). %
In the Heisenberg group at low dimensions, it is natural to ask objects to be horizontal, while such a natural regularity does not extend to high dimensions. Straight simplexes are defined to provide another idea of regularity fitting for high dimensions. %
After this, we construct simplexes %
with horizontality properties on low-dimensions and  linear combinations of straight layers on high-dimensions. %
% whose low-dimensional layers are horizontal and high-dimensional layers are straight:
%, notably with an ad hoc construction for the middle dimension $k=n+1$:

\begin{prop*}[\ref{lemma:lemma17-general}]
Let $\Delta^k$ denote the standard $k$-simplex in $\mathbb{R}^{k+1}$. %
There exist continuous, $\delta_r$-equivariant and $\tau_q$-equivariant maps
$$
\sigma^h : (\mathbb{H}^n)^{k+1} \to  PL( \Delta^k, \mathbb{H}^n ), \quad k = 0,\dots ,2n+1
$$
such that, for each $\sigma_{p_0, \dots, p_k}^h :  \Delta^k \to \mathbb{H}^n$,
\begin{enumerate}
\item
$\sigma_{p_0, \dots, p_k}^h$ is a singular simplex with vertices $p_0, \dots, p_k \in \mathbb{H}^n$,
\item
if $1 \leq k \leq 2n+1$, the faces and subfaces of $\sigma_{p_0, \dots, p_k}^h$ are $\sigma_{p_0, \dots, \hat p_I, \dots, p_k}^h$'s,  $I \subseteq \{ 0, \dots, k\}$.
\end{enumerate}
Moreover
\begin{enumerate}\setcounter{enumi}{2}
\item
 if $1 \leq k \leq 2n+1$, for $1\leq j\leq n, k$ and  $i_0, \dots, i_j \in \{ 0, \dots, k\}$, $i_0 < \dots < i_j$, $\sigma_{p_{i_0}, \dots, p_{i_j}}^h$'s are horizontal piecewise linear maps,
\item
if $ n+1 \leq k \leq 2n+1$, for $n+1 \leq j \leq k$ and $i_0, \dots, i_j \in \{ 0,\dots, k \}  $,   $i_0 < \dots < i_j$, %
$\sigma_{p_{i_0}, \dots, p_{i_j}}^h$'s are linear combinations of singular simplexes with straight $j$-layers.
\end{enumerate}
\end{prop*}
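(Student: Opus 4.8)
The plan is to construct the maps $\sigma^h$ by induction on the skeleton dimension $k$, building each $\sigma^h_{p_0,\dots,p_k}$ from a combinatorial subdivision of $\Delta^k$ on which we prescribe vertex images and then interpolate in a way that is horizontal on low-dimensional cells and straight on high-dimensional ones. First I would fix notation: for $k \le n$ the whole simplex should map horizontally, so the content of (3) already forces the construction there; for $k \ge n+1$ the simplex cannot be horizontal, and we must split $\Delta^k$ into finitely many subcells so that the $j$-skeleton for $j \le n$ lands in horizontal PL pieces while the $j$-cells for $j \ge n+1$ carry straight layers. The natural device is to triangulate $\Delta^k$ using a prism-type decomposition that separates the first $n$ barycentric directions from the remaining ones, and to define $\sigma^h$ cell-by-cell via the straight-simplex map $\sigma_{p_0,\dots,p_k}$ of Definition \ref{defin:straight_simplexconstr} composed with horizontal correctors on the low cells.

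The key steps, in order, would be: \emph{(i)} Base case $k=0$: set $\sigma^h_{p_0}$ to be the constant map at $p_0$; equivariance under $\delta_r$ and $\tau_q$ is immediate from $\delta_r(p_0)$, $\tau_q(p_0)$ being the obvious vertex. \emph{(ii)} Inductive step for $1 \le k \le n$: assuming $\sigma^h$ defined on $(k-1)$-tuples, extend to $k$-tuples by taking the horizontal PL join / interpolation of the vertices $p_0,\dots,p_k$; one checks this is well-defined, continuous, agrees on $\partial\Delta^k$ with the previously built $\sigma^h_{p_0,\dots,\hat p_i,\dots,p_k}$ so that (2) holds, and is horizontal, giving (3). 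Here I would invoke whatever connectivity/horizontality lemma the paper has established (every pair of points is joined by a horizontal PL path, and these can be chosen $\delta_r$- and $\tau_q$-equivariantly). \emph{(iii)} Inductive step for $n+1 \le k \le 2n+1$: fix a subdivision $\Delta^k = \bigcup_\alpha \Delta^k_\alpha$ adapted to the splitting of coordinates; on each top cell define $\sigma^h$ by the straight map $\sigma_{q_{\alpha,0},\dots,q_{\alpha,k}}$ on the new vertices, where the vertices lying in a $\le n$-dimensional face are the images already fixed by (ii) so that the low-dimensional faces remain the horizontal $\sigma^h_{p_{i_0},\dots,p_{i_j}}$. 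Then verify: continuity across cell boundaries (the straight maps agree on shared faces because straightness is determined by the ordered vertices), property (2) by matching faces of $\Delta^k$ to faces of the subdivision, property (4) because each top cell and each $j$-cell with $j\ge n+1$ is by construction a straight simplex, hence $\sigma^h_{p_{i_0},\dots,p_{i_j}}$ is the signed sum over the induced subdivision of straight $j$-simplexes. \emph{(iv)} Equivariance: since $\delta_r$ and $\tau_q$ are Lie group automorphisms resp. left translations, they commute with exp/log and hence with the straight-simplex construction, and by the inductive hypothesis they commute with the horizontal pieces; the subdivision of $\Delta^k$ is purely combinatorial and fixed, so $\sigma^h(\delta_r p_0,\dots) = \delta_r \circ \sigma^h(p_0,\dots)$ and likewise for $\tau_q$, on each cell and therefore globally.

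The main obstacle I expect is step \emph{(iii)}: arranging the subdivision of $\Delta^k$ so that simultaneously (a) the $j$-skeleton for $j \le n$ can be realized by \emph{horizontal} PL maps (which is a genuine constraint — horizontality is a non-holonomic condition and not every PL map between chosen vertices is horizontal, so the subdivision must leave enough freedom to route horizontal PL paths), and (b) the $j$-cells for $j \ge n+1$ glue into \emph{straight} simplexes with consistent orientations so the boundary formula $\partial\sigma^k = \sum_i(-1)^i\sigma^{k-1}_i$ is respected and (4) reads as a clean linear combination. Reconciling the horizontal requirement on the low skeleton with the straight requirement on the high skeleton at the cells of intermediate dimension near $j = n$, $n+1$ is the delicate point; I would handle it by choosing the subdivision so that every cell is a product (or iterated cone) of a $\le n$-dimensional "horizontal" factor and a "straight" factor, and then argue the two constructions are compatible on the overlap because a straight simplex all of whose vertices lie in a common horizontal coset restricts to a horizontal map there — a compatibility lemma I would isolate and prove separately before running the induction. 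Everything else (continuity, the chain-level face identities, and equivariance) is then bookkeeping.
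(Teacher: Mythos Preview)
Your proposal has a genuine gap at step \emph{(iii)}, precisely at the ``compatibility lemma'' you flag as the crux. The claim that a straight simplex whose vertices lie in a common horizontal coset restricts to a horizontal map is false, and in fact ill-posed: the horizontal distribution on $\mathbb{H}^n$ is bracket-generating and non-integrable, so there are no horizontal submanifolds of dimension $>n$, and even for $j\le n$ the straight $1$-simplex $\sigma_{p_0,p_1}$ is the translated one-parameter subgroup $t\mapsto p_1*\exp\bigl(t\log(p_1^{-1}*p_0)\bigr)$, which is horizontal only when $\log(p_1^{-1}*p_0)\in\mathfrak{h}_1$, i.e.\ for very special pairs $p_0,p_1$. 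Consequently, if you fill a top cell $\Delta^k_\alpha$ by a \emph{fully} straight simplex $\sigma_{q_{\alpha,0},\dots,q_{\alpha,k}}$, its low-dimensional faces are again straight (Lemma~\ref{lem:boundary-straight}) and will \emph{not} coincide with the horizontal PL maps built in step \emph{(ii)}. No prism or product decomposition of $\Delta^k$ repairs this, because the mismatch is between two genuinely different filling procedures, not between two coordinate directions.

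The paper avoids this problem by never asking straight and horizontal to agree on a common face. For $k\le n$ it invokes Gromov's $h$-principle (3.4.B of \cite{GROMOV}) to produce the horizontal PL skeleton --- this is a substantive input, not just connectivity. For $k=n+1$ it then takes the already-built horizontal boundary $\partial\Delta^{n+1}\to\mathbb{H}^n$, introduces the exponential barycentre $q=\exp\bigl(\tfrac{1}{n+2}\sum_i\log p_i\bigr)$, and cones each horizontal boundary face to $q$ using only the single extension step of Definition~\ref{defin:straight_simplexconstr}. The resulting $n+2$ pieces are singular $(n+1)$-simplexes with straight $(n+1)$-layer in the sense of Definition~\ref{defin:straightlayer} --- only the \emph{top} layer is straight, while the base is whatever horizontal map was already there. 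They glue along their new interior faces because the cone construction is identical on shared data. Higher $k$ proceed by the same cone-to-barycentre induction on $j$. The point you are missing is the distinction between a straight simplex and a simplex with straight top layer: the latter accepts an arbitrary singular base, and that is what makes the horizontal/straight interface trivially compatible.
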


%
% 3.3
In Subsection \ref{subsec:triangulation-Hn} we give a notion of triangulation for polyhedrons % polyhedral simplexes 
by applying a partial order to simplexes (Definition \ref{defin:triangulationQK})  and subsequently we triangulate the whole Heisenberg group: %
\begin{prop*}[\ref{prop:triangulation-heisenberg}]
Let $\Delta^k$ denote the standard $k$-simplex in $\mathbb{R}^{k+1}$. %
There exists a triangulation of the Heisenberg group  $\mathbb{H}^n$ composed of singular simplexes 
$$
\sigma^k  : \Delta^k \to \mathbb{H}^n , \quad k = 0,\dots ,2n+1
$$
 such that
\begin{enumerate}
\item
for $0<k \leq n$, $\sigma^k$'s are horizontal piecewise linear maps.
\item
for $ n+1 \leq k \leq 2n+1$,  $\sigma^k$'s are singular simplexes with straight $k$-layer.
\end{enumerate}
\end{prop*}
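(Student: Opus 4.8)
The plan is to reduce the global statement to the local one (Proposition \ref{lemma:lemma17-general}) by first triangulating a reference cube and then tiling $\mathbb{H}^n$ by its $\tau_q$- and $\delta_r$-translates. First I would fix a standard simplicial subdivision of the unit cube $[0,1]^{2n+1}$ in $\mathbb{R}^{2n+1}$ — for instance the Freudenthal/Kuhn triangulation, where the maximal simplexes are indexed by permutations of $\{1,\dots,2n+1\}$ and every face is a coordinate-ordered simplex. This gives a purely combinatorial list of vertex-tuples $(v_0,\dots,v_k)$, $v_i \in \mathbb{Z}^{2n+1}$, one for each face of each cube in the integer grid, together with the incidence (face-of) relations among them and a consistent choice of orientations so that the $(-1)^i$ boundary formula of Subsection \ref{subsec:simplexes} telescopes correctly across shared faces.

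Next I would push this combinatorial data into $\mathbb{H}^n$ using the maps $\sigma^h$ from Proposition \ref{lemma:lemma17-general}: to the vertex-tuple $(v_0,\dots,v_k)$ I associate the singular simplex $\sigma^h_{v_0,\dots,v_k} : \Delta^k \to \mathbb{H}^n$ (identifying $\mathbb{Z}^{2n+1}$ with a subset of $\mathbb{H}^n$ via exponential coordinates, and then using $\delta_r$ to rescale to an $\epsilon$-grid if a fine triangulation is wanted). Property (2) of Proposition \ref{lemma:lemma17-general} — that faces and subfaces of $\sigma^h_{p_0,\dots,p_k}$ are exactly the $\sigma^h$'s on the corresponding sub-tuples — is precisely what makes this assignment a simplicial complex in the sense of Definition \ref{defin:triangulationQK}: whenever two top-dimensional cube-simplexes meet along a common face in $\mathbb{R}^{2n+1}$, the induced singular simplexes in $\mathbb{H}^n$ agree there, because $\sigma^h$ depends only on the ordered vertex-tuple and the Freudenthal subdivision is consistent across cube walls. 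The partial-order/triangulation conditions of Definition \ref{defin:triangulationQK} then follow by transporting the corresponding (obvious) conditions for the Freudenthal complex in $\mathbb{R}^{2n+1}$ through $\sigma^h$. Regularity conclusions (1) and (2) are immediate: a simplex of dimension $k \leq n$ is, by property (3) of Proposition \ref{lemma:lemma17-general}, a horizontal piecewise linear map; a simplex of dimension $n+1 \leq k \leq 2n+1$ is, by property (4), a linear combination of singular simplexes with straight $k$-layer.

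It remains to check that these images genuinely cover $\mathbb{H}^n$ and overlap only along lower-dimensional faces, i.e. that we have a triangulation and not merely a locally finite family of singular simplexes. The union of the closed cubes $\tau_q([0,\epsilon]^{2n+1})$ over the appropriate lattice is all of $\mathbb{H}^n$ as a set (exponential coordinates are a global diffeomorphism onto $\mathbb{R}^{2n+1}$), and within each cube the Freudenthal simplexes cover and tile it; so the combinatorial covering is clear. The point that needs genuine care — and which I expect to be the main obstacle — is that the singular simplexes $\sigma^h_{p_0,\dots,p_k}$ are \emph{not} affine in exponential coordinates (their high-dimensional layers are straight in the exponential/logarithmic sense, not Euclidean-linear), so one must verify that the image of $\sigma^h$ on a cube-simplex really fills that cube and that two such images can only meet along a shared $\sigma^h$ on a common face. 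This is where the $\delta_r$- and $\tau_q$-equivariance of $\sigma^h$ does the work: equivariance forces the "straightening" to be performed compatibly with the lattice structure, so the images fit together exactly as their affine models do. I would make this rigorous by exhibiting, for each fixed $\epsilon$, an explicit homeomorphism of each cube $\tau_q([0,\epsilon]^{2n+1})$ carrying the affine Freudenthal triangulation to the $\sigma^h$-triangulation, built cell-by-cell from the $\sigma^h$'s and glued along faces using property (2); equivariance guarantees these cube-by-cube homeomorphisms agree on overlaps, yielding a global homeomorphism of $\mathbb{H}^n$ that transports the classical Freudenthal triangulation to the desired one, which in particular establishes that it is a bona fide triangulation.
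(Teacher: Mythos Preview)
Your approach mirrors the paper's: invoke Proposition \ref{lemma:lemma17-general} on vertex-tuples coming from a simplicial subdivision of cubes with vertices in $\mathbb{Z}^{2n+1}$ (the paper's Definition \ref{defin:triangulationQK}, which is exactly the Freudenthal/Kuhn triangulation you name), and assemble these into a global collection. The paper's own proof is two sentences to this effect; your first two paragraphs flesh it out correctly and in considerably more detail than the paper provides.

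Your third paragraph, however, aims at a stronger conclusion than the paper actually claims, and this is where a gap appears. The paper's notion of ``triangulation'' (Definition \ref{defin:triangulationQK}) is combinatorial: a collection of singular simplexes indexed by increasing maps into the vertex set, with compatible face relations and orientations. It does \emph{not} assert that the images cover $\mathbb{H}^n$ as a set, are pairwise interior-disjoint, or assemble into a homeomorphism from a simplicial complex. Your proposed cell-by-cell homeomorphism is therefore unnecessary for the stated proposition --- and is in fact problematic. The low-dimensional pieces $\sigma^h_{p_{i_0},\dots,p_{i_j}}$ with $j \le n$ are horizontal piecewise-linear maps produced via an $h$-principle argument, and nothing in Proposition \ref{lemma:lemma17-general} guarantees that they are injective or that the images of distinct simplexes meet only along common faces; a horizontal PL arc joining two nearby points typically wanders far from the affine segment between them. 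Equivariance under $\tau_q$ and $\delta_r$ ensures the \emph{assignments} are compatible across the lattice, but it does not force the \emph{images} to ``fit together exactly as their affine models do'' in the topological sense you want. The paper sidesteps all of this by working with the weaker combinatorial definition; if you want the genuinely stronger topological statement you would need a separate embedding argument, which the paper does not supply and which does not follow from what is proved.
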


\noindent
Although it is not within the purpose of this paper, this triangulation has been constructed with an eye on mass control and the support of currents on it. The triangulation is indeed a good candidate for supporting currents with finite mass and integrability properties. Further references on this subject are, among others, sec.2 in \cite{GCslicing}, 2.1-2.4 in \cite{BKP}, \cite{YOUNG2013}, \cite{YOUNG} and ch.10.3 in \cite{ECHLPT}.

%-----------------------------------------------------------------------------------------------------------------------------------------------------
%-----------------------------------------------------------------------------------------------------------------------------------------------------
%-----------------------------------------------------------------------------------------------------------------------------------------------------
%-----------------------------------------------------------------------------------------------------------------------------------------------------
%-----------------------------------------------------------------------------------------------------------------------------------------------------
%-----------------------------------------------------------------------------------------------------------------------------------------------------
%-----------------------------------------------------------------------------------------------------------------------------------------------------
%-----------------------------------------------------------------------------------------------------------------------------------------------------
%-----------------------------------------------------------------------------------------------------------------------------------------------------

\section{Preliminaries}

In this section we introduce the Heisenberg group $\mathbb{H}^n$, its structure as a Carnot group, the standard bases of vector fields and some regularity definitions. %
There exist many good references for such an introduction and we follow mainly sections 2.1, 2.2, 2.3 and 3.1 in \cite{FSSC} and sections 2.1 and 2.2 in \cite{CDPT}.

\subsection{The Heisenberg Group $\mathbb{H}^n$}\label{defH}

\begin{defin}\label{Heisenberg_Group}             %GC
The $n$-dimensional \emph{Heisenberg Group} $\mathbb{H}^n$, $n \geq 1$, %
with the group operation *, %
is the $(2n+1)$-dimensional %manifold and %
 connected, simply connected and nilpotent Lie group %
 identified through exponential coordinates with $\mathbb{R}^{2n+1}$, where $*$ is defined as
$$
(x,y,t)*(x',y',t') := \left  (x+x',y+y', t+t'- \frac{1}{2} \langle J
 \begin{pmatrix} 
x \\
y
\end{pmatrix} 
, 
 \begin{pmatrix} 
x' \\
y' 
\end{pmatrix} \rangle_{\mathbb{R}^{2n}} \right  ),
$$
with $x,y,x',y' \in \mathbb{R}^n$, $t,t' \in \mathbb{R}$ and $J=  \begin{pmatrix} 
 0 &  I_n \\
-I_n & 0
\end{pmatrix} $. %
One usually writes $x=(x_1,\dots,x_n) \in \mathbb{R}^n$ and $\mathbb{H}^n= (\mathbb{R}^{2n+1}, * )$ for brevity. %
Furthermore, with a %simple 
computation of the matrix product, we see that
$$
(x,y,t)*(x',y',t') := \left  (x+x',y+y', t+t' + \frac{1}{2} \sum_{j=1}^n \left ( x_j y_j'  -  y_j x_j'  \right ) \right  ).
$$
\end{defin}

\noindent
In the Heisenberg group $\mathbb{H}^n$ there are two important groups of automorphisms; %
the first one is the left translation with a fixed $q \in  \mathbb{H}^n$,
\begin{align*}
\tau_q : \mathbb{H}^n  \to \mathbb{H}^n, \ p \mapsto q*p,
\end{align*}
and the second one is the ($1$-parameter) group of the \emph{anisotropic dilations $\delta_r$}, with $r>0$:
\begin{align*}
\delta_r : \mathbb{H}^n   \to \mathbb{H}^n, \ (x,y,t)  \mapsto (rx,ry,r^2 t).
\end{align*}

\noindent
On the Heisenberg group  $\mathbb{H}^n$ we can define different equivalent distances: the Kor\'anyi and the Carnot--Carath\'eodory distance.

\begin{defin}%$\\$      GC              
\label{norm}
We define the \emph{Kor\'anyi} distance on $\mathbb{H}^n$ by setting, for $p,q \in \mathbb{H}^n$,
$$
d_{\mathbb{H}} (p,q) :=  \norm{ q^{-1}*p }_{\mathbb{H}},  
$$
where $ \norm{ \cdot }_{\mathbb{H}}$ is the \emph{Kor\'anyi}  norm
$  
\norm{(x,y,t)}_{\mathbb{H}}:=\left (  |(x,y)|^4+16t^2  \right )^{\frac{1}{4}},
$  
with $(x,y,t) \in \mathbb{R}^{2n} \times  \mathbb{R} $ and $| \cdot |$ being the Euclidean norm.
\end{defin}

\noindent
The Kor\'anyi distance is left invariant, meaning  
$ 
d_{\mathbb{H}} (p*q,p*q')=d_{\mathbb{H}} (q,q')$  for  $p,q,q' \in \mathbb{H}^n,
$
and homogeneous of degree $1$ with respect to $\delta_r$, meaning 
$
d_\mathbb{H} \left ( \delta_r (p), \delta_r (q)  \right ) = r d_{\mathbb{H}} (p,q) $,  for  $ p,q \in \mathbb{H}^n$ and $ r>0.
$ %
Furthermore, the \emph{Kor\'anyi} distance is equivalent to the \emph{Carnot--Carath\'eodory} distance $d_{cc}$, which is measured along curves whose tangent vector fields are horizontal.

%-----------------------------------------------------------------------------------------------------------------------------------------------------
%-----------------------------------------------------------------------------------------------------------------------------------------------------
%-----------------------------------------------------------------------------------------------------------------------------------------------------

\subsection{Left Invariance and Horizontal Structure on $\mathbb{H}^n$}\label{lefthor}

The standard basis of vector fields in the Heisenberg group $\mathbb{H}^n$ gives it the structure of a Carnot group.

\begin{defin}%$\\$      %GC
\label{XYT}
The standard basis of left invariant vector fields in $\mathbb{H}^n$
consists of the following: % $2n+1$ vector fields:
 $$
\begin{cases}
X_j &:= \partial_{x_j} - \frac{1}{2} y_j\partial_{t} \quad \emph{\emph{ for }}  j=1,\dots,n , \\
Y_j &:= \partial_{y_j} + \frac{1}{2} x_j\partial_{t} \quad \emph{\emph{ for }}  j=1,\dots,n,  \\
T &:= \partial_{t}.
\end{cases}
 $$
\end{defin}

\noindent   %1.2.12 AND 1.2.13 IN PHD AND PHL
One can observe that $\{ X_1,\dots,X_n,Y_1,\dots,Y_n,T \}$ becomes $\{ \partial_{x_1},\dots, \partial_{x_n}, \partial_{y_1},\dots,\partial_{y_n}, \partial_{t} \}$ at the origin. Another easy observation is that the only non-trivial commutators of the vector fields $X_j,Y_j$ and $T$ are   
$
[X_j,Y_j]=T  $, %\emph{\emph{ for }} 
for $j=1,\dots,n.
$   
This immediately tells that all higher-order commutators are zero and that the Heisenberg group is a Carnot group of step $2$. Indeed we can write its Lie algebra $\mathfrak{h}$ as 
$
\mathfrak{h} =\mathfrak{h}_1 \oplus \mathfrak{h}_2,
$ 
with
$$
\mathfrak{h}_1 = \spn \{ X_1,  \ldots, X_n, Y_1, \ldots, Y_n \} \quad \text{and} \quad \mathfrak{h}_2 =\spn \{ T \}.
$$
Conventionally one denotes as $\mathfrak{h}_1$ the space of \emph{horizontal} and $\mathfrak{h}_2$ the space of \emph{vertical vector fields}.   %
Similarly, one denotes a map \emph{horizontal} when its tangent vector fields are horizontal.  %
%\noindent
The vector fields $\{ X_1,\dots,X_n,Y_1,\dots,Y_n\}$ are homogeneous of order $1$ with respect to the dilation $\delta_r, \  r \in \mathbb{R}^+$, i.e.,
$$
X_j (f\circ \delta_r)=r X_j(f)\circ \delta_r \quad  \text{ and }   \quad   Y_j (f\circ \delta_r)=r Y_j(f)\circ \delta_r ,
$$
where $f \in C^1 (U, \mathbb{R} )$, $U\subseteq \mathbb{H}^n$ open and $j=1,\dots,n$. On the other hand, the vector field $T$ is homogeneous of order $2$, i.e.,
$$
T(f\circ \delta_r)=r^2T(f)\circ \delta_r.
$$

\noindent
The vector fields $X_1,\dots,X_n,Y_1,\dots,Y_n,T$ form an orthonormal basis of $\mathfrak{h}$ with a scalar product $\langle \cdot , \cdot \rangle $. In the same way, $X_1,\dots,X_n,Y_1,\dots,Y_n$ form an orthonormal basis of $\mathfrak{h}_1$ with a scalar product $\langle \cdot , \cdot \rangle_H $ defined purely on $\mathfrak{h}_1$.

\noindent
Note that, following 2.1 in \cite{CDPT} and 1.2 \cite{CG}, when in Definition \ref{Heisenberg_Group} we identify $\mathbb{H}^n $ with $ \mathbb{R}^{2n+1}$, we do so by denoting a point in the Lie algebra as
$$
V = x_1 X_1 +  \ldots +x_n X_n + y_1 Y_1+ \ldots + y_n Y_n + t T =  ( x_1 , \ldots  x_n, y_1 \ldots, y_n, t)  \in  \mathfrak{h}
$$
and by  identifying $ ( x_1 , \ldots  x_n, y_1 \ldots, y_n, t) \in \mathbb{R}^{2n+1}$ with $\exp(x_1 X_1 +  \ldots +x_n X_n + y_1 Y_1+ \ldots + y_n Y_n + t T) \in \mathbb{H}^n$, where $\exp$ %
is %
% Source: Bofiglioli
% An introduction to the geometrical analysis of vector fields : with applications to maximum principles and lie groups
the exponential map (see, for instance, \cite{BB})
\begin{align*}
\exp : \mathfrak{h} &\to \mathbb{H}^n\\
V &\mapsto \gamma(1, V, 0_{\mathbb{H}^n}),
\end{align*}
where the map $\gamma (t) := \gamma(t, V, 0_{\mathbb{H}^n})$ denotes the maximal solution of
$$
\begin{cases}
\dot{\gamma}(t) = V_{\gamma (t)},\\
\gamma (0) = 0_{\mathbb{H}^n}.
\end{cases}
$$
%
% WIKIPEDIA
% where gamma
% is the unique one-parameter subgroup of G whose tangent vector at the identity is equal to X.
%
\begin{comment}
can be written as %
the exponential map
\begin{align*}
\exp : \mathfrak{h} &\to \mathbb{H}^n\\
V &\mapsto \sum_{k=0}^{\infty} \frac{V^k}{k!}.
\end{align*}
Notice that this formulation of $\exp$ is more common when vectors are written in the form of matrices, which is another equivalent way to describe the Heisenberg group. %
\end{comment}
We denote these \emph{exponential coordinates} and use the notation $ ( x_1 , \ldots  x_n, y_1 \ldots, y_n, t) \in \mathbb{H}^n$. %
By, for example, Theorem 1.2.1 in \cite{CG}, we notice that $\exp$ is a diffeomorphism and we denote its inverse as the logarithmic map $\log$.

\begin{no}\label{notW}
Sometimes it will be useful to consider all the elements of the basis of $\mathfrak{h}$ with one symbol; to do so, we write
$$
\begin{cases}
W_j &:= X_j \quad \text{ for } j=1,\dots,n,\\
W_{n+j} &:= Y_j  \quad \text{ for } j=1,\dots,n,\\
W_{2n+1}&:=T.
\end{cases}
$$
In the same way, the point $(x_1,\dots,x_n,y_1,\dots,y_n,t)$ will be denoted as $(w_1,\dots,w_{2n+1})$.
\end{no}

\begin{no}\label{not:Wderivata}
Remember Notation \ref{notW} and let $j=1,\dots,2n$. Define
$$
\tilde w_{ j}:=
\begin{cases}
w_{n+j}, \quad   &j=1,\dots,n,\\
-w_{j-n}, \quad   & j=n+1,\dots,2n.
\end{cases}
$$
Then we have that
$$
W_j = \partial_{w_j} -\frac{1}{2}  \tilde w_{ j} \partial_t, \quad j=1,\dots,2n.
$$
\end{no}

\noindent
Next we give the definition of Pansu differentiability for maps between two Carnot groups $\mathbb{G}$ and $\mathbb{G}'$. After that, we state it in the special case of $\mathbb{G}=\mathbb{H}^n$ and $\mathbb{G}'=\mathbb{R}$.\\
We denote a function $h : (\mathbb{G},*,\delta) \to (\mathbb{G}',*',\delta')$ \emph{homogeneous} if $h(\delta_r(p))= \delta'_r \left ( h(p) \right )$ for all $r>0$.
%Next we give the definition of Pansu differentiability for maps between  $\mathbb{H}^n$ and $ \mathbb{R}$.\\
%We denote a function $h : (\mathbb{H}^n,*,\delta_r) \to (\mathbb{R},+)$ \emph{homogeneous} if $h(\delta_r(p))= r h(p)$ for all $r>0$.

\begin{defin}[see \cite{PANSU} and 2.10 in \cite{FSSC}]\label{dGGG}
Consider two Carnot groups $(\mathbb{G},*,\delta)$ and $(\mathbb{G}',*',\delta')$. A function $f: U \to \mathbb{G}'$, $U \subseteq \mathbb{G}$ open, is \emph{P-differentiable} at $p_0 \in U$ if there is a (unique) homogeneous Lie group %s
 homomorphism $d_H f_{p_0} : \mathbb{G} \to \mathbb{G}'$ such that
$$
d_H f_{p_0} (p) := \lim\limits_{r \to 0} \delta'_{\frac{1}{r}} \left ( f(p_0)^{-1} *' f(p_0* \delta_r (p) ) \right ),
$$
%$$
%d_H f_{p_0} (p) := \lim\limits_{r \to 0} \frac{  f \left (p_0* \delta_r (p) \right ) - f(p_0) }{r}
%$$
uniformly for $p$ in compact subsets of $U$.
\end{defin}

\begin{defin}%[see \cite{PANSU} and 2.10 in \cite{FSSC}]
\label{dHHH} % 1.2.21 in phd and phl
Consider a function $f: U \to \mathbb{R}$, $U \subseteq \mathbb{H}^n$ open. $f$ is \emph{P-differentiable} at $p_0 \in U$ if there is a (unique) homogeneous Lie group %s 
 homomorphism $d_H f_{p_0} : \mathbb{H}^n \to \mathbb{R}$ such that
$$
d_H f_{p_0} (p) := \lim\limits_{r \to 0} \frac{  f \left (p_0* \delta_r (p) \right ) - f(p_0) }{r},
$$
uniformly for $p$ in compact subsets of $U$.
\end{defin}

\noindent
Consider again a function $f:U \to \mathbb{H}^n$, $U\subseteq \mathbb{H}^n$ open, and interpret $\mathbb{H}^n = \mathbb{R}^{2n+1}$ and $f$  in components as $f=(f^1,\dots,f^{2n+1})$,  $f^j:U \to \mathbb{R}$, $j=1,\dots,2n+1$. A straightforward computation shows that, if $f$ is P-differentiable in the sense of Definition \ref{dGGG}, then $f^1,\dots,f^{2n}$ are P-differentiable in the sense of Definition \ref{dHHH}.

\begin{defin}[see 2.11 in \cite{FSSC}]\label{veryfirstnabla} %1.2.23 in phd and phl
Consider a function $f$ P-differentiable  at $p \in U$, $f:U \to \mathbb{R}$, $U\subseteq \mathbb{H}^n$ open. %, a P-differentiable function at $p \in U$. 
The \emph{Heisenberg gradient} or \emph{horizontal gradient} of $f$ at $p$ is defined as
$$
\nabla_\mathbb{H} f(p) := \left ( d_H f_p \right )^* \in \mathfrak{h}_1,
$$
or, equivalently,
$$
\nabla_\mathbb{H} f(p) = \sum_{j=1}^{n} \left [  (X_j f)(p) X_j  + (Y_j f)(p) Y_j  \right ].
$$
\end{defin}

\begin{no}[see 2.12 in \cite{FSSC}]\label{CH1}
Sets of differentiable functions can be defined with respect to the P-differentiability. Consider $ U \subseteq \mathbb{G}$ and $V \subseteq \mathbb{G}'$ open, then  
$C_{\mathbb{H}}^1 (U, V)$ is the function space of continuous functions $f:U \to V $  such that the P-differential $d_H f$ is continuous. %
 $[C_{\mathbb{H}}^1(V,U)]^{k} $ is the set of $k$-uples $f = \{ f_1, \dots, f_k  \}$ such that each $f_i \in C_{\mathbb{H}}^1 (U, V)$ for  $i = 1, \dots, k  $.
\end{no}

\noindent
Finally we give some definitions of regularity for surfaces.

\begin{defin}[see 3.1 in \cite{FSSC}]\label{Hreglow}
Let $1\leq k \leq n$. A subset $S \subseteq \mathbb{H}^n$ is a $\mathbb{H}$-\emph{regular} $k$-\emph{dimensional surface} if for all $p \in S$   there exists  a neighbourhood $ U \in \mathcal{U}_p$, an open set $ V \subseteq \mathbb{R}^k$ and a injective mapping $\varphi : V \to U$, $ \varphi \in [C_{\mathbb{H}}^1(V,U)]^{2n+1} $ with $d_H \varphi $ injective such that $ S \cap U = \varphi (V)$.
\end{defin}

\begin{defin}[see 3.2 in \cite{FSSC}]\label{Hreg}
Let $1\leq k \leq n$. A subset $S \subseteq \mathbb{H}^n$ is a $\mathbb{H}$-\emph{regular} $k$-\emph{codimensional surface} if for all $ p \in S $ there exists a neighbourhood $ U \in \mathcal{U}_p$ and a mapping  $ f : U \to \mathbb{R}^k$, $ f \in [C_{\mathbb{H}}^1(U,\mathbb{R}^k)]^k$, such that  $  {\nabla_\mathbb{H} f_1} \wedge \dots \wedge {\nabla_\mathbb{H} f_k}   \neq 0 $ on   $ U $ and  $  S \cap U = \{ f=0 \} $.
\end{defin}

%-----------------------------------------------------------------------------------------------------------------------------------------------------
%-----------------------------------------------------------------------------------------------------------------------------------------------------
%-----------------------------------------------------------------------------------------------------------------------------------------------------
%-----------------------------------------------------------------------------------------------------------------------------------------------------
%-----------------------------------------------------------------------------------------------------------------------------------------------------
%-----------------------------------------------------------------------------------------------------------------------------------------------------
%-----------------------------------------------------------------------------------------------------------------------------------------------------
%-----------------------------------------------------------------------------------------------------------------------------------------------------
%-----------------------------------------------------------------------------------------------------------------------------------------------------

\section{An example of $\mathbb{H}$-regular grid in $\mathbb{H}^n$}\label{section:example-grid}

In this section we show an example of grid on the Heisenberg group $\mathbb{H}^n$, with $n \geq 1$, composed by $\mathbb{H}$-regular $k$-codimensional surfaces on the first two high dimensional layers of the grid, except of course in the case of $n=1$ which has only one high dimensional layer. The $\mathbb{H}$-regularity of the faces makes them invariant under left translations and anisotropic dilations. Nevertheless, this grid is limited because we cannot choose points to be vertices, but we are constrained by the side-length of the faces and the starting point. Another limit is that, a priori, we do not know whether this grid can support some regularity at each dimension, or get associated with a triangulation %
(simplicial complex) %
of the space $\mathbb{H}^n$.

\begin{defin}\label{defin:cubeep}
Consider a point $p \in \mathbb{H}^n$ and $\epsilon > 0$ small. We define a \textit{cube with starting vertex as} $p$ \textit{and side length} $\epsilon$, and we write $Q_{ p, \epsilon}$, as:%\\\\
\begin{align*}
 Q_{ p, \epsilon} &:=   \left \{   w \in \mathbb{H}^n   \ / \ w=q+ p \text{ with } q \in \left [ 0,\epsilon \right ]^{2n+1} \right \}.
\end{align*}
\end{defin}

\noindent
The reason for defining a cube starting from a vertex will be apparent later when we will consider a cube starting from the origin, and it will be convenient to have the origin as a vertex instead than, for example, as the center of the cube. \\\\ % 
Consider now a similar cube with starting vertex $\epsilon p$ and side length $\epsilon$:
\begin{align*}
 Q_{\epsilon p, \epsilon}
%&=  \left \{   w \in \mathbb{H}^n   \ / \ w=q+\epsilon p \text{ with } q \in \left [0,\epsilon \right ]^{2n+1} \right \}\\
&=  \big \{   w \in \mathbb{H}^n   \ / \ w=(w_1, \dots, w_{2n+1}) \text{ with }  \epsilon p_i \leq w_i \leq \epsilon + \epsilon p_i , \ i \in \left \{ 1, \dots, 2n+1 \right \} \big \}
\end{align*}
with $\epsilon > 0$ and $p \in \mathbb{Z}^{2n+1}$.

\begin{lem}
Consider a cube $ Q_{\epsilon p, \epsilon}$ with starting vertex $\epsilon p$ and side length $\epsilon$, $p \in \mathbb{Z}^{2n+1}$, $\epsilon > 0$.  By construction, we get that:
$$
\bigcup_{p \in \mathbb{Z}^{2n+1}}  Q_{\epsilon p, \epsilon} = \mathbb{H}^{n}, % \quad \text{ with }  \epsilon > 0,
$$
and  
$$
Q_{\epsilon p, \epsilon} \cap Q_{\epsilon p', \epsilon} = \partial Q_{\epsilon p, \epsilon} \cap \partial Q_{\epsilon p', \epsilon} \quad \quad \text{ for all } p,p' \in \mathbb{Z}^{2n+1}, \ p \neq p'. % , \   \epsilon > 0.
$$
\end{lem}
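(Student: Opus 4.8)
The statement to prove is essentially a combinatorial/geometric fact about the standard cubical decomposition of $\mathbb{R}^{2n+1}$, transported to $\mathbb{H}^n$ via the identification of the underlying set. The plan is to exploit that the cubes $Q_{\epsilon p, \epsilon}$ are defined purely in terms of the exponential coordinates $(w_1,\dots,w_{2n+1})$, with no reference to the group operation $*$, so that all the work happens in $\mathbb{R}^{2n+1}$ with its usual topology and then is reread in $\mathbb{H}^n$.

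First I would prove the covering identity $\bigcup_{p \in \mathbb{Z}^{2n+1}} Q_{\epsilon p, \epsilon} = \mathbb{H}^n$. Given $w = (w_1,\dots,w_{2n+1}) \in \mathbb{H}^n = \mathbb{R}^{2n+1}$, set $p_i := \lfloor w_i / \epsilon \rfloor$ for each $i$; then $p = (p_1,\dots,p_{2n+1}) \in \mathbb{Z}^{2n+1}$ and, writing $q_i := w_i - \epsilon p_i$, one has $0 \leq q_i < \epsilon \leq \epsilon$, hence $q = (q_1,\dots,q_{2n+1}) \in [0,\epsilon]^{2n+1}$ and $w = q + \epsilon p \in Q_{\epsilon p, \epsilon}$. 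The reverse inclusion is immediate since each $Q_{\epsilon p, \epsilon} \subseteq \mathbb{H}^n$ by definition. This gives the first claim.

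Next I would prove the intersection identity. The inclusion $\partial Q_{\epsilon p, \epsilon} \cap \partial Q_{\epsilon p', \epsilon} \subseteq Q_{\epsilon p, \epsilon} \cap Q_{\epsilon p', \epsilon}$ is trivial since each closed cube contains its boundary. For the other inclusion, suppose $w \in Q_{\epsilon p, \epsilon} \cap Q_{\epsilon p', \epsilon}$ with $p \neq p'$; it suffices to show $w \in \partial Q_{\epsilon p, \epsilon}$ (and symmetrically $w \in \partial Q_{\epsilon p', \epsilon}$), equivalently $w \notin \mathrm{int}\, Q_{\epsilon p, \epsilon} = \prod_i (\epsilon p_i, \epsilon + \epsilon p_i)$. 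Since $p \neq p'$ there is an index $i_0$ with $p_{i_0} \neq p'_{i_0}$, say $p_{i_0} < p'_{i_0}$, so $p_{i_0} + 1 \leq p'_{i_0}$. From $w \in Q_{\epsilon p', \epsilon}$ we get $w_{i_0} \geq \epsilon p'_{i_0} \geq \epsilon(p_{i_0}+1) = \epsilon + \epsilon p_{i_0}$, while from $w \in Q_{\epsilon p, \epsilon}$ we get $w_{i_0} \leq \epsilon + \epsilon p_{i_0}$; hence $w_{i_0} = \epsilon + \epsilon p_{i_0}$, so the $i_0$-th coordinate of $w$ attains the upper boundary value of the $i_0$-th edge of $Q_{\epsilon p, \epsilon}$, placing $w$ on $\partial Q_{\epsilon p, \epsilon}$. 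The symmetric argument (using that $w_{i_0} = \epsilon p'_{i_0}$ when $p_{i_0} + 1 = p'_{i_0}$, or a further boundary coordinate otherwise) shows $w \in \partial Q_{\epsilon p', \epsilon}$ as well.

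There is no serious obstacle here: the only thing to be careful about is that the notion of boundary $\partial Q_{\epsilon p, \epsilon}$ is the topological boundary of the closed box in $\mathbb{R}^{2n+1}$, which is independent of the Heisenberg structure, and that the half-open tiling argument via the floor function handles the case where several coordinates lie on a shared face simultaneously. I would remark that the same proof shows more: $\mathrm{int}\, Q_{\epsilon p, \epsilon} \cap \mathrm{int}\, Q_{\epsilon p', \epsilon} = \emptyset$ for $p \neq p'$, which is the precise sense in which the $Q_{\epsilon p, \epsilon}$ tile $\mathbb{H}^n$.
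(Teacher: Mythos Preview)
Your argument is correct. The paper does not actually supply a proof of this lemma: the words ``by construction'' in the statement are the entire justification, and the text moves directly on to Proposition~\ref{prop:1-codim-grid}. So there is nothing to compare against; your floor-function covering argument and the coordinate-wise boundary argument are exactly the sort of routine verification the author is tacitly invoking.

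One small remark: the lemma as stated in the paper says ``for all $p,p' \in \mathbb{Z}^{2n+1}$'', but of course the identity $Q_{\epsilon p,\epsilon} \cap Q_{\epsilon p',\epsilon} = \partial Q_{\epsilon p,\epsilon} \cap \partial Q_{\epsilon p',\epsilon}$ fails when $p=p'$. You correctly restrict to $p\neq p'$ in your proof, which is the intended content. Also, in your symmetric step you can simply observe that the chain of inequalities $\epsilon p'_{i_0} \leq w_{i_0} = \epsilon(p_{i_0}+1) \leq \epsilon p'_{i_0}$ forces $w_{i_0}=\epsilon p'_{i_0}$ directly, so no ``further boundary coordinate'' case is needed.
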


\noindent
The cubes  $ Q_{\epsilon p, \epsilon}$'s form a grid for the Heisenberg group $\mathbb{H}^n$. %
For such cubes, we can give the following proposition.

\begin{prop}\label{prop:1-codim-grid}
Consider any cube $Q_{\epsilon p, \epsilon}$ with starting vertex $\epsilon p$ and side length $\epsilon$, $p \in \mathbb{Z}^{2n+1}$, $\epsilon > 0$. The interior of each face of $Q_{\epsilon p, \epsilon}$ is a $\mathbb{H}$-regular $1$-codimensional  surface.\\
Moreover, each face is a $\mathbb{H}$-regular $1$-codimensional  surface unless the face both is (Euclidean) perpendicular to the $t$-axis and intersects the $t$-axis itself.
\end{prop}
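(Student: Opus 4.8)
The plan is to exhibit, for each face of the cube $Q_{\epsilon p, \epsilon}$, an explicit defining function $f : U \to \mathbb{R}$ and check the two conditions of Definition \ref{Hreg}: that $f \in C^1_{\mathbb{H}}(U,\mathbb{R})$ and that $\nabla_{\mathbb{H}} f \neq 0$ on $U$. A face of the cube lies in a hyperplane $\{ w_i = c \}$ for some $i \in \{1, \dots, 2n+1\}$ and some constant $c$, so the natural candidate is $f(w) := w_i - c$, which is smooth (hence $C^1_{\mathbb{H}}$) and has $\{ f = 0 \}$ equal to the whole hyperplane, and in particular equal to the face (or its interior) intersected with a suitable neighbourhood $U$.

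First I would split into two cases according to whether the fixed coordinate is a horizontal one or the vertical one. If $i \in \{1, \dots, 2n\}$, i.e.\ the face is perpendicular to some $w_i$-axis with $w_i = x_j$ or $w_i = y_j$, then using Notation \ref{not:Wderivata} one computes $W_i f = \partial_{w_i}(w_i - c) - \tfrac12 \tilde w_i \partial_t(w_i - c) = 1 \neq 0$, so $\nabla_{\mathbb{H}} f$ has a nonzero horizontal component everywhere and the face is $\mathbb{H}$-regular with no exceptions. If instead $i = 2n+1$, i.e.\ the face lies in a hyperplane $\{ t = c \}$ perpendicular to the $t$-axis, then taking $f(w) = t - c$ gives $X_j f = -\tfrac12 y_j$ and $Y_j f = \tfrac12 x_j$, so
$$
\nabla_{\mathbb{H}} f(w) = \tfrac12 \sum_{j=1}^n \left( -y_j X_j + x_j Y_j \right),
$$
which vanishes exactly at points with $x = y = 0$, that is, on the $t$-axis. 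Hence on the interior of such a face, provided the face does not meet the $t$-axis, one still has $\nabla_{\mathbb{H}} f \neq 0$; and the only way a closed face can fail to be $\mathbb{H}$-regular is if it is perpendicular to the $t$-axis \emph{and} contains a point of the $t$-axis, since there $\nabla_{\mathbb{H}} f = 0$ and moreover no other defining function can rescue the surface (any $\mathbb{H}$-regular $1$-codimensional surface through such a point would have a horizontal normal, contradicting that the surface is a Euclidean horizontal hyperplane, whose tangent space at that point is the whole horizontal plane plus nothing vertical — one argues this via uniqueness of the P-differential).

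For the interior of a face perpendicular to the $t$-axis one must still worry about the case where the interior itself meets the $t$-axis. I would note that the interior of a face is an open $2n$-dimensional box inside $\{t = c\}$, and that whether it contains the origin-shifted $t$-axis point $(0,\dots,0,c)$ depends on $p$ and $\epsilon$; but the claim as stated asserts the interior is \emph{always} $\mathbb{H}$-regular. The resolution is that $\mathbb{H}$-regularity is a local property, so to verify it at a point $w_0$ in the interior with $x_0 = y_0 = 0$ it suffices to find \emph{some} defining function near $w_0$, and here one can perturb: replace $f(w) = t - c$ by $\tilde f(w) = t - c + w_1$ (or more carefully an affine combination), whose $\nabla_{\mathbb{H}}$ at $w_0$ is now nonzero since the $X_1$-component picks up the constant $1$ from $\partial_{x_1} w_1$; then $\{\tilde f = 0\}$ is a different hyperplane but still contains a neighbourhood of $w_0$ inside the face only if the face is not flat — so in fact the correct fix is subtler and this is the main obstacle: one needs a defining function whose zero set agrees with the \emph{flat} face near $w_0$ yet has nonvanishing horizontal gradient, which is impossible for a single scalar function if $w_0$ is on the $t$-axis, since the horizontal gradient of any $C^1_{\mathbb{H}}$ function vanishing on $\{t = c\}$ must vanish at $(0,\dots,0,c)$. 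Thus I expect the actual statement to rely on the observation that the \emph{interior} of a face of $Q_{\epsilon p, \epsilon}$ with $p \in \mathbb{Z}^{2n+1}$ and the face being perpendicular to the $t$-axis simply never intersects the $t$-axis — because the $t$-axis point $(0,\dots,0, \epsilon p_{2n+1} + s)$ lies on the boundary of the box $\prod_{i=1}^{2n}[\epsilon p_i, \epsilon p_i + \epsilon]$ in the first $2n$ coordinates only when each $p_i \in \{-1, 0\}$, and even then the point $(0,\dots,0)$ is a \emph{corner} of that box, hence on the boundary, not the interior — so I would carefully verify that $(0,\dots,0)$ is always on the boundary of $\prod[\epsilon p_i, \epsilon p_i + \epsilon]$ whenever it lies in the box at all, which is immediate since $0 \in \{\epsilon p_i, \epsilon p_i + \epsilon\}$ forces it to be an endpoint in that coordinate. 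This last bookkeeping step, reconciling "interior of the face" with "does not meet the $t$-axis", is the only genuinely delicate point; everything else is the direct computation of $\nabla_{\mathbb{H}} f$ above.
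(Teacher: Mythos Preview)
Your proposal is correct and follows essentially the same route as the paper: define each face by $f(w) = w_i - c$, compute $\nabla_{\mathbb{H}} f$ directly in the horizontal case ($i \le 2n$, where $W_i f = \pm 1$) and in the vertical case ($i = 2n+1$, where $\nabla_{\mathbb{H}} f$ vanishes exactly on the $t$-axis). Your detour through a perturbation argument is unnecessary --- the resolution you reach at the end is precisely the paper's: because $p \in \mathbb{Z}^{2n+1}$, the point $(0,\dots,0)$ can lie in $\prod_{i=1}^{2n}[\epsilon p_i, \epsilon p_i + \epsilon]$ only as a corner, so the $t$-axis meets a $t$-perpendicular face only at its boundary and the interior is always $\mathbb{H}$-regular.
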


\begin{proof}
The cube $Q_{\epsilon p, \epsilon}$ has $2(2n+1)$ different faces and, given $j \in  \left \{ 1, \dots, 2n+1 \right \}$, we can group them into two families as $F_{j}$'s and $E_{j}$'s, with
\begin{align*}
F_{j} :& = Q_{\epsilon   p , \epsilon \ \vert_{w_j = \epsilon p_j}}\\
&=\left \{   w \in \mathbb{H}^n   \ / \ w=(w_1, \dots, w_{2n+1}) \text{ with }  \epsilon p_i \leq w_i \leq  \epsilon + \epsilon p_i \text{ for } i \neq j, \ w_j = \epsilon p_j \right \}
\end{align*}
and
\begin{align*}
 E_{j} :&= Q_{\epsilon   p, \epsilon \ \vert_{ w_j = \epsilon + \epsilon p_j}}\\
&=\left \{   w \in \mathbb{H}^n   \ / \ w=(w_1, \dots, w_{2n+1}) \text{ with }  \epsilon p_i \leq w_i \leq  \epsilon + \epsilon p_i \text{ for } i \neq j, \ w_j = \epsilon + \epsilon p_j \right \}.
\end{align*}
Indeed %
$
\bigcup_{j=1}^{2n+1} \left ( F_j \cup E_j \right ) = \partial Q_{\epsilon p, \epsilon}.
$  %
For each $j \in  \left \{ 1, \dots, 2n+1 \right \}$, we define two functions:
\begin{align*}
&f_{j}:  \mathbb{H}^n \to \mathbb{R}                               & \text{ and } \quad \quad &   g_{j}:  \mathbb{H}^n\to \mathbb{R}   \\
&f_{j}(w_1 , \dots, w_{2n+1}) =\epsilon p_j - w_j                                    &   &   g_{j}(w_1 , \dots, w_{2n+1}) = \epsilon + \epsilon p_j - w_j .
\end{align*}
It follows by their definitions that $F_{j} \subseteq \{ f_{j}=0 \}$ and $E_{j} \subseteq \{ g_{j}=0 \}$ for all $j \in \left \{ 1, \dots, 2n+1 \right \}$.\\
Consider now two cases: $j \in  \left \{ 1, \dots, 2n \right \}$ and $j= 2n+1 $. %
In the first case, we let $j=1,\dots,2n$ and, recalling Notation \ref{not:Wderivata}, we write that
$$
W_j f_{j} =  \left (  \partial_{w_j} - \frac{1}{2} \tilde w_{j} \partial_t  \right ) (\epsilon p_{j} -w_j)  =  -1
$$
and
$$
W_j g_{j} =  \left (  \partial_{w_j} - \frac{1}{2} \tilde w_{j} \partial_t  \right ) (\epsilon + \epsilon p_{j} -w_j) = -1.
$$
This implies that $\nabla_{\mathbb{H}} f_{j}  = \sum_{i=1}^{2n} W_i f_j W_i  \neq 0$ %on $F_{j}$ 
and, likewise, $\nabla_{\mathbb{H}} g_{j} \neq 0$ on $\mathbb{H}^n$ %$E_{j}$
for all $j\in \left \{ 1, \dots, 2n \right \}$. %\\
By Definition \ref{Hreg}, it follows that $F_{j}$ and $E_{j}$ are $\mathbb{H}$-regular $1$-codimensional  surfaces for $j\in \left \{ 1, \dots, 2n \right \}$. \\%
In the second case, we let $j=2n+1$. %
This time for $i \in \left \{ 1, \dots, 2n+1 \right \}$ and we get that:
$$
W_i f_{2n+1} =  \left (  \partial_{w_i} - \frac{1}{2} \tilde w_{i} \partial_t  \right ) (\epsilon p_{2n+1} -t)  =  \frac{1}{2} \tilde w_{i}  =
\begin{cases}
  \frac{1}{2}w_{n+i}, \quad   &i=1,\dots,n,\\
-   \frac{1}{2} w_{i-n}, \quad   & i=n+1,\dots,2n,
\end{cases}
$$
and
$$
W_i g_{2n+1} =  \left (  \partial_{w_i} - \frac{1}{2} \tilde w_{i} \partial_t  \right ) (\epsilon + \epsilon p_{2n+1} -t) =  \frac{1}{2} \tilde w_{i}  =
\begin{cases}
  \frac{1}{2}w_{n+i}, \quad   &i=1,\dots,n,\\
-   \frac{1}{2} w_{i-n}, \quad   & i=n+1,\dots,2n.
\end{cases}
$$
We see that the horizontal gradients $\nabla_{\mathbb{H}} f_{2n+1}$ and $\nabla_{\mathbb{H}} g_{2n+1}$ vanish only on the $t$-axis, where $w_i=0$ for $i=1,\dots,2n$.  %
Moreover, from the way we designed our grid, the $t$-axis can intersect the faces $F_{2n+1}$ and $E_{2n+1}$ only at their border.\\
Denoting the interior sets of  $F_{2n+1}$ and $E_{2n+1}$ as $F_{2n+1}^{\mathrm{o}}$ and $E_{2n+1}^{\mathrm{o}}$ respectively, we can say that $\nabla_{\mathbb{H}} f_{2n+1} = \sum_{i=1}^{2n} W_i f_{2n+1} W_i  \neq 0$ on $F_{2n+1}^{\mathrm{o}}$ and $\nabla_{\mathbb{H}} g_{2n+1} \neq 0$ on $E_{2n+1}^{\mathrm{o}}$ respectively. By Definition \ref{Hreg}, the interior sets $F_{2n+1}^{\mathrm{o}}$ and $E_{2n+1}^{\mathrm{o}}$ are $\mathbb{H}$-regular $1$-codimensional surfaces. Moreover, if the faces do not intersect the $t$-axis, with the same argument $F_{2n+1}$ and $E_{2n+1}$ are $\mathbb{H}$-regular $1$-codimensional surfaces and this proves the claim.
\end{proof}

\noindent
%Now that we know that at least all the interiors of all faces are $\mathbb{H}$-regular $1$-codimensional  surfaces, we move to check their boundaries. 
Moreover, the borders of the faces $F_j$'s and $E_j$'s are surfaces on their own and with their own regularity, as follows. %\\%
%Now we only need to check the boundaries, to be $\mathbb{H}$-regular $2$-codimensional surfaces.

\begin{prop}\label{prop:faces_of_faces}
Consider $n > 1$ and $Q_{\epsilon p, \epsilon}$ any cube with starting vertex $\epsilon p$ and side length $\epsilon$, $p \in \mathbb{Z}^{2n+1}$, $\epsilon > 0$. Furthermore, consider any of its faces, divided into its own subfaces. The interior of each subface is a $\mathbb{H}$-regular $2$-codimensional  surface.\\
% Moreover, each subface is a $\mathbb{H}$-regular $2$-codimensional surface if the face is not perpendicular to the $t$-axis or does not intersect the $t$-axis.
Moreover, each subface is a $\mathbb{H}$-regular $2$-codimensional surface unless the face both is (Euclidean) perpendicular to the $t$-axis and intersects the $t$-axis itself.
\end{prop}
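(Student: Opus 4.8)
The plan is to imitate the argument for Proposition~\ref{prop:1-codim-grid}. A subface of $Q_{\epsilon p,\epsilon}$ is obtained from one of its faces by fixing a second coordinate, so it is the common zero set $\{f=0,\ g=0\}$ where $f$ is one of the functions $f_j,g_j$ and $g$ is one of $f_k,g_k$ from that proof, for a pair of indices $j<k$ in $\{1,\dots,2n+1\}$. Since for $j\le 2n$ the functions $f_j,g_j$ are independent of $t$, one has $\nabla_{\mathbb{H}}f_j=\nabla_{\mathbb{H}}g_j=-W_j$, a constant horizontal field, while the computation already made in the previous proof gives $\nabla_{\mathbb{H}}f_{2n+1}=\nabla_{\mathbb{H}}g_{2n+1}=\tfrac12\sum_{i=1}^{2n}\tilde w_i W_i$. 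Thus the whole question reduces to deciding, in each case, where the wedge of the two relevant horizontal gradients vanishes, and then applying Definition~\ref{Hreg}.

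First I would handle the case $j,k\in\{1,\dots,2n\}$, which is exactly the case of a subface not perpendicular to the $t$-axis. Here the wedge of the horizontal gradients equals, up to sign, $W_j\wedge W_k$, a nowhere-vanishing constant $2$-vector because $W_1,\dots,W_{2n}$ are pointwise linearly independent. Hence in this case both the subface and its interior are $\mathbb{H}$-regular $2$-codimensional surfaces, with no exceptions.

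Next comes the case $k=2n+1$, i.e.\ a subface perpendicular to the $t$-axis, with $j\le 2n$. Now the wedge equals (up to sign) $W_j\wedge\bigl(\tfrac12\sum_{i=1}^{2n}\tilde w_i W_i\bigr)=\tfrac12\sum_{i\ne j}\tilde w_i\,(W_j\wedge W_i)$, and since the $2$-vectors $W_j\wedge W_i$, $i\ne j$, are linearly independent in $\Lambda^2\mathfrak{h}_1$, this vanishes exactly where $\tilde w_i=0$ for all $i\in\{1,\dots,2n\}\setminus\{j\}$. Unwinding Notation~\ref{not:Wderivata}, this means $w_m=0$ for every $m\in\{1,\dots,2n\}$ except possibly the index paired with $j$ (namely $j+n$ or $j-n$); in particular it forces $w_j=0$. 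On the interior of the subface each free coordinate $w_m$ with $m\le 2n$, $m\ne j$, runs over the open interval $(\epsilon p_m,\epsilon+\epsilon p_m)$, which contains no integer multiple of $\epsilon$ and in particular not $0$; as $n>1$ there are at least $2n-2\ge 2$ such free coordinates whose index is also different from the partner of $j$, so at least one $w_m$ is nonzero and the wedge does not vanish. Hence the interior of the subface is always $\mathbb{H}$-regular, and the place where $n>1$ is genuinely used is exactly this count of free coordinates. Finally, if the subface does not intersect the $t$-axis, then $w_j=\epsilon p_j\ne 0$ on the whole subface, so the vanishing locus of the wedge is empty on it and the full subface is $\mathbb{H}$-regular $2$-codimensional; together with the previous case this proves the proposition.

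I expect the main obstacle to be organizational rather than conceptual: carefully translating the condition ``$\tilde w_i=0$ for all $i\ne j$'' through Notation~\ref{not:Wderivata} into a clean statement about which $w_m$ must vanish, and then verifying that this locus meets a given subface precisely when the subface is both perpendicular to the $t$-axis and intersects it. One should also double-check the degenerate count of free coordinates, which is exactly what fails for $n=1$.
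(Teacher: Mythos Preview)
Your approach is essentially the paper's: write each subface as the joint zero set of two of the affine functions $f_j,g_j,f_k,g_k$ from the proof of Proposition~\ref{prop:1-codim-grid}, compute $\nabla_{\mathbb H}h^1\wedge\nabla_{\mathbb H}h^2$ in the two cases $j,k\le 2n$ and $k=2n+1$, and invoke Definition~\ref{Hreg}. Your identification of the vanishing locus in the second case is actually sharper than the paper's---the paper asserts the wedge vanishes only on the $t$-axis, whereas you correctly find the larger set $\{w_m=0\text{ for all }m\ne\text{partner of }j\}$---and you make explicit the count of free coordinates that uses $n>1$.

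The only slip is in your final sentence: the implication ``subface does not meet the $t$-axis $\Rightarrow w_j\ne 0$'' is not valid as stated, since the fixed value of $w_j$ could be $0$ while the interval for the partner coordinate fails to contain $0$; in that situation the subface misses the $t$-axis but the wedge can still vanish along a boundary segment. The paper's proof glosses over exactly the same point (it simply says ``the $t$-axis can intersect the face only at its borders'' and ``if the face does not intersect the $t$-axis\ldots''), so this is an imprecision already present in the statement and the original argument rather than a defect particular to your strategy.
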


\noindent
Note that there is no equivalent to Proposition \ref{prop:faces_of_faces} for $n=1$. %
Indeed,  an equivalent results would need to consider (instead of $2$-codimensional surfaces) $1$-dimensional surfaces as in Definition \ref{Hreglow}, which are horizontal continuously differentiable curves. On the contrary, the $1$-dimensional subsurfaces of $Q_{\epsilon p, \epsilon} \subseteq  \mathbb{H}^1$ are simply segments, not necessarily horizontal.

\begin{proof}
Consider a cube $Q_{\epsilon p, \epsilon}$ with starting vertex $\epsilon p$ and side length $\epsilon$, $p \in \mathbb{Z}^{2n+1}$, $\epsilon > 0$, and its faces $F_j$'s and $E_j$'s, $j \in \left \{ 1, \dots, 2n+1 \right \}$, as in the proof of Proposition \ref{prop:1-codim-grid}. By construction, the boundary of a $1$-codimensional face can itself  be naturally written as the union of $4n$ subfaces. \\
Fix  $j \in \left \{ 1, \dots, 2n+1 \right \}$ and consider $k \in \left \{ 1, \dots, 2n+1 \right \}$ with $k \neq j$. The boundary pieces of $F_j$ can be grouped into two families as $F_{k, F_j}$'s and $E_{k, F_j}$'s:
\begin{align*}
F_{k, F_j} :=\Big \{&   w \in \mathbb{H}^n   \ / \ w=(w_1, \dots, w_{2n+1}) \text{ with }  \epsilon p_i \leq w_i \leq  \epsilon + \epsilon p_i \text{ for } i \neq j,k, \\
%& \quad  \quad \quad  \quad \quad 
&w_j = \epsilon p_j, \ w_k = \epsilon p_k \Big \} % , \quad \text{with } k \in \left \{ 1, \dots, 2n+1 \right \} \text{and } k \neq j,
\end{align*}
and
\begin{align*}
E_{k, F_j} :=\Big \{&   w \in \mathbb{H}^n   \ / \ w=(w_1, \dots, w_{2n+1}) \text{ with }  \epsilon p_i \leq w_i \leq  \epsilon + \epsilon p_i \text{ for } i \neq j,k, \\
%& \quad  \quad \quad  \quad \quad 
&w_j = \epsilon p_j, \ w_k = \epsilon + \epsilon p_k \Big \}. % , \quad \text{with } k \in \left \{ 1, \dots, 2n+1 \right \}  \text{and } k \neq j.
\end{align*}
Likewise,  the boundary pieces of $E_j$ can be written as $F_{k, E_j}$'s and $E_{k, E_j}$'s:
\begin{align*}
F_{k, E_j} :=\Big \{&   w \in \mathbb{H}^n   \ / \ w=(w_1, \dots, w_{2n+1}) \text{ with }  \epsilon p_i \leq w_i \leq  \epsilon + \epsilon p_i \text{ for } i \neq j,k, \\
&w_j =\epsilon + \epsilon p_j, \ w_k = \epsilon p_k \Big \} % , \quad \text{with } k \in \left \{ 1, \dots, 2n+1 \right \}  \text{and } k \neq j,
\end{align*}
and
\begin{align*}
E_{k, E_j} :=\Big \{&   w \in \mathbb{H}^n   \ / \ w=(w_1, \dots, w_{2n+1}) \text{ with }  \epsilon p_i \leq w_i \leq  \epsilon + \epsilon p_i \text{ for } i \neq j,k, \\
&w_j =\epsilon + \epsilon p_j, \ w_k = \epsilon + \epsilon p_k \Big \}. % , \quad \text{with } k \in \left \{ 1, \dots, 2n+1 \right \}  \text{and } k \neq j.
\end{align*}
Indeed %
$
\bigcup_{\substack{k=1 \\ k\neq j}}^{2n+1} \left (F_{k, F_j} \cup E_{k, F_j} \right ) = \partial F_j
$  %
and
$
\bigcup_{\substack{k=1 \\ k\neq j}}^{2n+1} \left (F_{k, E_j} \cup E_{k, E_j} \right ) = \partial E_j .
$ %\\  %
For simplicity, we consider $F_{k, F_j}$ to fix the idea. We define the function
\begin{align*}
&h:  \mathbb{H}^n   
\to \mathbb{R}^2,\\
&h(w_1 , \dots, w_{2n+1}) = (\epsilon p_j -w_j, \epsilon p_k -w_k) = (h^1, h^2),
\end{align*}
for which we have that 
$
 F_{k, F_j} = 	   \left \{   h = 0  \right \}
$.\\ %
Consider now different cases. %
In the first case, we take $j,k \in \left \{ 1, \dots, 2n \right \}$, $k \neq j$   % .  % $k,j  \neq 2n+1$. 
and we have that
$$
\begin{cases}
W_i h^1 = -\delta_{ji} \\
W_i h^2 = -\delta_{ki}
\end{cases}
 \quad  \text{for each } i \in \left \{ 1, \dots, 2n \right \}.
$$
Thus
\begin{align*}
\nabla_{\mathbb{H}} h^1  \wedge \nabla_{\mathbb{H}} h^2 
&= \sum_{l=1}^{2n} W_l h^1 W_l \wedge \sum_{m=1}^{2n} W_m h^2 W_m 
= \sum_{\substack{l,m=1 \\ l\neq m}}^{2n} W_l h^1   W_m h^2 W_l \wedge  W_m\\
&= \sum_{\substack{l,m=1 \\ l\neq m}}^{2n} \delta_{jl}\delta_{km} W_l \wedge  W_m
= W_j \wedge  W_k \neq 0.
\end{align*}
By Definition \ref{Hreg}, it follows that $F_{k, F_j}$ is a $\mathbb{H}$-regular $1$-codimensional  surfaces for $j,k \in \left \{ 1, \dots, 2n \right \}$, $k \neq j$.\\ %
As a second case, we keep  $j \in  \left \{ 1, \dots, 2n \right \}$ and consider the case of $k=2n+1$. Consequently, as above we get
$$
W_i h^1 = -\delta_{ji}  \quad  \text{for each } i \in \left \{ 1, \dots, 2n \right \}  %.
$$
and then
\begin{align*}
\nabla_{\mathbb{H}} h^1  \wedge \nabla_{\mathbb{H}} h^2 
&= \sum_{l=1}^{2n} W_l h^1 W_l \wedge \sum_{m=1}^{2n} W_m h^2 W_m 
= \sum_{\substack{l,m=1 \\ l\neq m}}^{2n} W_l h^1   W_m h^2 W_l \wedge  W_m\\
&= \sum_{\substack{l,m=1 \\ l\neq m}}^{2n} -\delta_{jl} W_m h^2 W_l \wedge  W_m
= \sum_{\substack{m=1 \\ m\neq j}}^{2n} - W_m h^2 W_j \wedge  W_m
= \sum_{\substack{m=1 \\ m\neq j}}^{2n}  W_m h^2  W_m   \wedge  W_j  ,
\end{align*}
where
$$
W_m h^2 =  \left (  \partial_{w_m} - \frac{1}{2} \tilde w_{m} \partial_t  \right ) (\epsilon p_{2n+1} -t)  =  \frac{1}{2} \tilde w_{m}  =
\begin{cases}
  \frac{1}{2}w_{n+m}, \quad   &m=1,\dots,n,\\
-   \frac{1}{2} w_{m-n}, \quad   & m=n+1,\dots,2n.
\end{cases}
$$
The wedge $\nabla_{\mathbb{H}} h^1  \wedge \nabla_{\mathbb{H}} h^2 $ vanishes only on the $t$-axis, where $w_i=0$ for $i=1,\dots,2n$.  %
Moreover, from the way we designed our grid, the $t$-axis can intersect the face $F_{2n+1, F_j}$ only at its borders.\\
Then we can denote the interior set of  $F_{2n+1, F_j}$ as $F_{2n+1, F_j}^{\mathrm{o}}$ and say that $\nabla_{\mathbb{H}} h^1  \wedge \nabla_{\mathbb{H}} h^2  \neq 0$ on $F_{2n+1, F_j}^{\mathrm{o}}$. By Definition \ref{Hreg}, the interior set $F_{2n+1, F_j}^{\mathrm{o}}$ is a $\mathbb{H}$-regular $2$-codimensional  surface. %
Moreover, if the face does not intersect the t-axis, with the same argument $F_{2n+1, F_j}$ is a $\mathbb{H}$-regular 2-codimensional surfaces.\\
Although $k$ and $j$ are not symmetrical in their geometrical meaning (we denoted a $1$-codimensional face as $F_j$ and a $2$-codimensional face as $F_{k, F_j}$), in the proof their role is interchangeable and the same follows if we consider   $j=2n+1$ with $k \in  \left \{ 1, \dots, 2n \right \}$. This proves the claim.
\end{proof}

%-----------------------------------------------------------------------------------------------------------------------------------------------------
%-----------------------------------------------------------------------------------------------------------------------------------------------------
%-----------------------------------------------------------------------------------------------------------------------------------------------------
%-----------------------------------------------------------------------------------------------------------------------------------------------------
%-----------------------------------------------------------------------------------------------------------------------------------------------------
%-----------------------------------------------------------------------------------------------------------------------------------------------------
%-----------------------------------------------------------------------------------------------------------------------------------------------------
%-----------------------------------------------------------------------------------------------------------------------------------------------------
%-----------------------------------------------------------------------------------------------------------------------------------------------------

\section{A triangulation of the Heisenberg Group $\mathbb{H}^n$}\label{section:triangulation}

In this section we show a triangulation of the Heisenberg group with some regularity properties. %
In Subsection \ref{subsec:simplexes} we present a quick introduction and notations for singular $k$-simplexes, their faces and boundary. %
In Subsection \ref{subsec:straight-horizontal} we define straight simplexes, which we use to construct simplexes  %
with horizontality properties on low-dimensions and  linear combinations of straight layers on high-dimensions. %
% whose low-dimensional layers are horizontal and whose high-dimensional layer are straight. 
%, notably with an ad hoc construction for the middle dimension $k=n+1$. % 
%
In Subsection \ref{subsec:triangulation-Hn} we give a notion of triangulation for polyhedrons %  polyhedral simplexes 
and, subsequently, for the whole Heisenberg group as well.

\subsection{Simplexes  in $\mathbb{H}^n$}\label{subsec:simplexes}
In this subsection we present a quick introduction and notations for singular $k$-simplexes, their faces and boundary. The main source for this part is the  book of Maunder \cite{MAU}.

\begin{defin}\label{defin:standard-simplex}
Let $k \in \mathbb{N}$ and denote $\Delta^k$ the standard $k$\emph{-simplex} in $\mathbb{R}^{k+1}$ with vertices $e_0=(1,0,\dots,0)$, \dots, $e_k=(0,\dots,0,1)$, meaning that we have
\begin{align*}
\Delta^{k} &= \{ (s_1, \dots, s_{k+1}) \ / \ s_1 + \dots + s_{k+1}  = 1, \  s_1, \dots, s_{k+1} \geq 0 \} \\
&= \{ (s_1, \dots, s_k, 1- s_1 - \dots - s_k ) \ / \   s_1, \dots, s_{k} \geq 0, \ s_1 + \dots + s_{k}  \leq 1 \} .
\end{align*}
\end{defin}

\noindent
Note that if we have $\Delta^{k-1}, \Delta^k \in \mathbb{R}^{k+1}$, it immediately follows that
$$
\Delta^{k} \cap \{  s_1 + \dots + s_{k}  = 1 \} = \Delta^{k-1}.
$$
For example,
\begin{align*}
\Delta^{1} &= \{ (s_1, s_2, 0) \ / \ s_1 + s_2  = 1, \  s_1, s_2 \geq 0 \}, \\
\Delta^{2} &= \{ (s_1, s_2, 1- s_1 - s_2 ) \ / \   s_1, s_2 \geq 0, \ s_1 + s_2  \leq 1 \}, \\
\Delta^{2} & \cap \{  s_1 + s_2  = 1 \} = \Delta^{1}.
\end{align*}

\begin{defin}[see 4.2.1-2 in \cite{MAU}]\label{defin:singular-simplex}
Let $k \in \mathbb{N}$ and denote $\Delta^k$ the standard $k$\emph{-simplex} in $\mathbb{R}^{k+1}$ with vertices $e_0=(1,0,\dots,0)$, \dots, $e_k=(0,\dots,0,1)$. %
We define a \emph{singular} $k$\emph{-simplex} in $\mathbb{H}^n$ as a continuous map from the standard $k$-simplex to the Heisenberg group:
$$
\sigma^k : \Delta^k \to \mathbb{H}^n.
$$
\end{defin}

\begin{defin}[see 4.2.3 in \cite{MAU} and pp. 244, 340 in \cite{LEEtopological}]
Let $X$ be a topological space and $k \in \mathbb{N}$, $k>0$. We define the $k^{th}$ \emph{singular chain group} of $X$, and we denote it $S_k(X)$, as the free abelian group with the singular $k$-simplexes  in $X$ as generators. %
Elements of $S_k(X)$ are formal linear combinations of singular $k$-simplexes with integer coefficients, where ``formal'' refers to the structure of $S_k(X)$ as a free abelian group (see p. 340 in \cite{LEEtopological}).
%
%Let $k \in \mathbb{N}$, $k>0$. We define the $k^{th}$ \emph{singular chain group} of $\mathbb{H}^n$, and we denote it $S_k(\mathbb{H}^n)$, as the free abelian group with the singular $k$-simplexes  in $\mathbb{H}^n$ as generators. %
%Elements of $S_k(\mathbb{H}^n)$ are formal linear combinations of singular $k$-simplexes with integer coefficients, where ``formal'' refers to the structure of $S_k(\mathbb{H}^n)$ as a free abelian group (see p. 340 in \cite{LEEtopological}).
\end{defin}

\noindent
The restriction of a singular $k$-simplex $\sigma^k : \Delta^k \to \mathbb{H}^n$ to $\Delta^{k-1}$ is a singular $(k-1)$-simplex. %
Indeed, the boundary of a $k$-simplex can be defined as the sum of $(k-1)$-simplexes as follows:

\begin{defin}[see 4.2.4 and 2.4.5 in \cite{MAU}]\label{defin:boundary-simplex}
Consider $k>0$ and $0 \leq i \leq k$. The $i^{th}$ \emph{face map} $F^i :  \Delta^{k-1} \to  \Delta^k$ is the element of $S_{k-1}(\Delta^k)$ which lands on the face of $\Delta^k$ delimited by the vertices  ($e_0, \dots, \hat{e}_i,  \dots, e_k$), where the hat notation $\ \hat{} \ $ means that that vertex has been omitted. Now let $ \sigma^k : \Delta^k \to \mathbb{H}^n$ be a $k$-simplex and define its boundary as
$$
\partial \sigma^k := \sum_{i=0}^{k} (-1)^i  \sigma^{k} \circ F^i.
$$
One can write 
$$ 
\sigma_i^{k-1} :=   \sigma^{k} \circ F^i \quad \text{and} \quad       \partial \sigma^k := \sum_{i=0}^{k} (-1)^i \sigma_i^{k-1},
$$
where $\sigma_i^{k-1}$ becomes the singular (since it is the composition of continuous functions) $(k-1)$-simplex restriction of $\sigma^k$ to $\Delta^{k-1}$ that acts as $ \sigma^k$ on the $i^{th}$ face of $ \Delta^{k}$.
\end{defin}

\begin{lem}[see 4.2 in \cite{MAU}]
The operator $\partial$ extends to a boundary homomorphism
$$
\partial : S_k(\mathbb{H}^n) \to S_{k-1}(\mathbb{H}^n).
$$
Furthermore, the pair $( S_k(\mathbb{H}^n) , \partial)$ is a chain complex.
\end{lem}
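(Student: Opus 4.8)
The plan is to treat the two assertions separately. The extension of $\partial$ to a group homomorphism is immediate from the construction of $S_k(\mathbb{H}^n)$ as a free abelian group: Definition \ref{defin:boundary-simplex} specifies $\partial$ on each free generator $\sigma^k$ by the finite sum $\sum_{i=0}^{k}(-1)^i\,\sigma^k\circ F^i$, which lies in $S_{k-1}(\mathbb{H}^n)$ since each $\sigma^k\circ F^i$ is a composition of continuous maps and hence a singular $(k-1)$-simplex. By the universal property of free abelian groups this assignment extends uniquely to a homomorphism $\partial\colon S_k(\mathbb{H}^n)\to S_{k-1}(\mathbb{H}^n)$, acting by $\partial\big(\sum_\alpha n_\alpha\sigma^k_\alpha\big)=\sum_\alpha n_\alpha\,\partial\sigma^k_\alpha$.

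For the chain-complex property it remains to verify $\partial\circ\partial=0$, and by linearity it is enough to check this on a single singular $k$-simplex $\sigma^k$ with $k\geq 2$. The one structural input needed is the commutation relation for the face maps: for indices $0\le j<i\le k$ one has $F^i\circ F^j=F^j\circ F^{i-1}$ as maps $\Delta^{k-2}\to\Delta^k$, where in each composite the inner face map has target $\Delta^{k-1}$ and the outer has target $\Delta^k$. I would prove this by a direct vertex count: both composites are the unique affine map sending $e_0,\dots,e_{k-2}$, in order, onto the codimension-two face of $\Delta^k$ spanned by $(e_0,\dots,\hat e_j,\dots,\hat e_i,\dots,e_k)$, so they agree.

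Then I would expand $\partial\partial\sigma^k=\sum_{i=0}^{k}\sum_{j=0}^{k-1}(-1)^{i+j}\,\sigma^k\circ F^i\circ F^j$ and split the double sum into the part with $j<i$ and the part with $j\ge i$. On the first part apply the commutation relation to rewrite $F^i\circ F^j=F^j\circ F^{i-1}$, and then reindex by $a=j$, $b=i-1$; since $j<i$ corresponds precisely to $a\le b$ and $(-1)^{i+j}=-(-1)^{a+b}$, this part becomes exactly the negative of the $j\ge i$ part. The two pieces cancel, giving $\partial\partial\sigma^k=0$, hence $\partial\circ\partial=0$ on all of $S_k(\mathbb{H}^n)$, so by definition $\big(S_\bullet(\mathbb{H}^n),\partial\big)$ is a chain complex.

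There is no genuine obstacle here, since this is the standard verification from singular homology; the only point requiring care is the bookkeeping, namely keeping track of which face map carries which target dimension inside each composite, checking the face-map commutation relation against the relabelling of vertices implicit in Definition \ref{defin:boundary-simplex}, and confirming that the sign $(-1)^{i+j}$ acquires exactly one extra minus under the reindexing so that the two halves of the sum cancel rather than reinforce.
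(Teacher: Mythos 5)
Your proof is correct: the extension by the universal property of the free abelian group, the simplicial identity $F^i\circ F^j=F^j\circ F^{i-1}$ for $j<i$ (verified by the vertex count), and the split-and-reindex cancellation of the double sum are all in order. The paper offers no proof of its own, simply citing 4.2 in Maunder, and the argument given there is exactly the standard one you reproduce, so there is nothing to reconcile.
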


\noindent
By definition, the boundary of a singular $k$-chain is the linear combination of the boundaries of its simplexes.

\subsection{Straight and horizontal simplexes in $\mathbb{H}^n$}\label{subsec:straight-horizontal}
In this subsection we define special singular simplexes, called straight simplexes, and we use them to construct other simplexes  %
with horizontality properties on low-dimensions and  linear combinations of straight layers on high-dimensions. %
% whose low-dimensional layers are horizontal and high-dimensional layers are straight. 
%, notably with an ad hoc construction for the middle dimension $k=n+1$. %
The main ideas and concepts are based on and expanded from subsections 2.1 and 5.4 of the preprint  \cite{BKP}.

\begin{defin} %(%from
\label{defin:straight_simplexconstr}
Let $k \in \mathbb{N}$, denote $\Delta^k$ the standard $k$-simplex in $\mathbb{R}^{k+1}$ with vertices $e_0=(1,0,\dots,0)$, \dots, $e_k=(0,\dots,0,1)$ %
and consider $k+1$ points $p_0, \dots, p_k \in \mathbb{H}^n$. We define the \emph{straight} $k$\emph{-simplex}
$$
\sigma_{p_0, \dots, p_k} : \Delta^k \to \mathbb{H}^n
$$
iteratively as follows. %
%For $k=0$
As $0^{th}$ step, we define the map
\begin{align*}
\sigma_{p_0} : \Delta^0=\{ e_0 \} &\to \{ p_0 \} \subseteq\mathbb{H}^n\\
e_0 & \mapsto p_0
\end{align*}
which sends $e_0$ to $p_0 \in \mathbb{H}^n$. For $1 \leq j \leq k$, the $j^{th}$ step consists of three parts:
\begin{enumerate}
\item
consider the map $ \sigma_{p_0, \dots, p_{j-1}} : \Delta^{j-1} \to \mathbb{H}^n $ from the previous step so that 
$$
\sigma_{p_0, \dots, p_{j-1}} (e_l) = p_l, \quad \text{for } l =0,\dots, j-1
$$
and a point $p_j \in \mathbb{H}^n$. Define a new map:
$$
\bar \Gamma : = \log \circ \tau_{p_j^{-1}} \circ  \sigma_{p_0, \dots, p_{j-1}} : \Delta^{j-1} \to \mathfrak{h}
$$
where %
\begin{itemize}
\item [$\vcenter{\hbox{\tiny$\bullet$}}$]
 $\tau_q :  \mathbb{H}^n  \to \mathbb{H}^n $ is the left translation with respect to the point $q \in \mathbb{H}^n$,
\item [$\vcenter{\hbox{\tiny$\bullet$}}$]
$\log  :  \mathbb{H}^n  \to \mathfrak{h}$ is the logarithmic map defined in subsection \ref{lefthor},
\end{itemize}
\item
extend $\bar \Gamma : \Delta^{j-1} \to \mathfrak{h}$ to a continuous map $\Gamma : \Delta^{j}  \to   \mathfrak{h}$ on the affine cone with vertex at $0 \in \mathfrak{h}$ and base $\bar \Gamma (\Delta^{j-1} )$,
\item
apply the exponential map $\exp :  \mathfrak{h} \to \mathbb{H}^n$ and $\tau_{p_j}  :  \mathbb{H}^n \to \mathbb{H}^n$ 
in order to finish the contruction of $\sigma_{p_0, \dots, p_j}$: % (see figure \ref{induction3_3}):
$$
\sigma_{p_0, \dots, p_j} : =\tau_{p_j} \circ \exp \circ   \Gamma  :  \Delta^{j} \to  \mathbb{H}^n .
$$
\end{enumerate}
\end{defin}

\noindent
Note that in Definition \ref{defin:straight_simplexconstr}, in the $j^{th}$ step:
\begin{itemize}
\item
in part (1), we obtain a set  $\bar \Gamma \left  ( \Delta^{j-1} \right ) \subseteq \mathfrak{h}$, % 
which is at most $j-1$ dimensional in $\mathfrak{h}$.
\item
in part (2), the image of $\Gamma$ is a %an affine 
cone $\mathcal{C}$ on $\mathfrak{h}$, % 
which can be parametrised as
\begin{align*}
\mathcal{C} &= \left \{ \Gamma ( s_1, \dots, s_j, 1 - s_1 - \dots - s_j ) \ / \   s_1, \dots, s_{j} \geq 0, \ s_1 + \dots + s_{j}  \leq 1  \right \} .
\end{align*}
with vertices:
$0, \ \log \left ( p_j^{-1} * p_0 \right ) , \ \dots, \ \log \left ( p_j^{-1} *  p_{j-1}   \right )  .$   %
\item
in part (3), the image $\sigma_{p_0, \dots, p_j} ( \Delta^{j} ) $ has, as expected, vertices $p_0, \dots, p_j$.  
\end{itemize}

\begin{lem}\label{lem:straight-singular}
Straight $k$-simplexes are also singular $k$-simplexes. %
\end{lem}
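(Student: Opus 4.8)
The plan is to unwind Definition~\ref{defin:straight_simplexconstr} and verify that the map $\sigma_{p_0,\dots,p_k}:\Delta^k\to\mathbb{H}^n$ produced by the iterative construction is continuous, since ``singular $k$-simplex'' (Definition~\ref{defin:singular-simplex}) means precisely a continuous map $\Delta^k\to\mathbb{H}^n$. I would argue by induction on the step index $j$, showing at each step that $\sigma_{p_0,\dots,p_j}$ is continuous.

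For the base case $j=0$, the map $\sigma_{p_0}:\Delta^0=\{e_0\}\to\{p_0\}$ is a map out of a one-point space, hence trivially continuous. For the inductive step, assume $\sigma_{p_0,\dots,p_{j-1}}:\Delta^{j-1}\to\mathbb{H}^n$ is continuous. Then $\bar\Gamma=\log\circ\tau_{p_j^{-1}}\circ\sigma_{p_0,\dots,p_{j-1}}$ is a composition of continuous maps: $\tau_{p_j^{-1}}$ is continuous because left translation is smooth on the Lie group $\mathbb{H}^n$, and $\log$ is continuous because it is a diffeomorphism (as recalled after the definition of $\exp$ in Subsection~\ref{lefthor}, via Theorem~1.2.1 in \cite{CG}). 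Hence $\bar\Gamma:\Delta^{j-1}\to\mathfrak{h}$ is continuous. Next, the extension $\Gamma:\Delta^j\to\mathfrak{h}$ to the affine cone with vertex $0$ and base $\bar\Gamma(\Delta^{j-1})$ is given explicitly, using the coordinate description of $\Delta^j$ from Definition~\ref{defin:standard-simplex}, by
$$
\Gamma(s_1,\dots,s_j,1-s_1-\dots-s_j) = (s_1+\dots+s_j)\,\bar\Gamma\!\left(\tfrac{s_1}{s_1+\dots+s_j},\dots,\tfrac{s_j}{s_1+\dots+s_j},\,\cdot\right)
$$
when $s_1+\dots+s_j>0$, and $\Gamma=0$ at the cone vertex (where $s_1+\dots+s_j=0$); this is continuous on the open set where $s_1+\dots+s_j>0$ as a composition and product of continuous maps, and continuous at the vertex because $\bar\Gamma(\Delta^{j-1})$ is compact (being the continuous image of the compact set $\Delta^{j-1}$), so $(s_1+\dots+s_j)\bar\Gamma(\cdot)\to 0$ as $s_1+\dots+s_j\to 0$. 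Finally $\sigma_{p_0,\dots,p_j}=\tau_{p_j}\circ\exp\circ\Gamma$ is continuous since $\exp$ and $\tau_{p_j}$ are smooth, completing the induction.

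I would close by noting that the vertex condition $\sigma_{p_0,\dots,p_j}(e_l)=p_l$ for $l=0,\dots,j$ is maintained at each step (as already observed in the remarks following Definition~\ref{defin:straight_simplexconstr}), so the resulting $\sigma_{p_0,\dots,p_k}$ is genuinely a singular $k$-simplex with the prescribed vertices. The only mildly delicate point is the continuity of the cone extension $\Gamma$ at the vertex of the cone, where the naive formula degenerates; this is handled by the compactness of the base $\bar\Gamma(\Delta^{j-1})$, which forces the rescaled image to shrink to $0$. Everything else is a routine composition-of-continuous-maps argument, relying on the smoothness of left translations and of $\exp$, $\log$ on $\mathbb{H}^n$.
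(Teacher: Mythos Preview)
Your proof is correct and follows exactly the approach of the paper, which simply asserts in one line that straight simplexes are ``continuous by construction and so they are also singular.'' You have supplied the details the paper leaves implicit---the induction on $j$, the continuity of each constituent map, and the care at the cone vertex via compactness of $\bar\Gamma(\Delta^{j-1})$---but the underlying argument is the same.
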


\begin{proof}
Straight simplexes are  continuous by construction and so they are also singular .
\end{proof}

\begin{no}\label{notation:hat-boundary}
Given Lemma \ref{lem:straight-singular}, we can write the boundary of a straight $k$-simplex $ \sigma_{p_0, \dots, p_k}$ as
$$
\partial \sigma_{p_0, \dots, p_k}= \sum_{i=0}^k (-1)^i  \sigma_{p_0, \dots, p_k} \circ F^i   = \sum_{i=0}^k (-1)^i  \sigma_{p_0, \dots, {\hat{p}}_i , \dots, p_k},
$$
where $\sigma_{p_0, \dots, {\hat{p}}_i , \dots, p_k} :=  \sigma_{p_0, \dots, p_k} \circ F^i$ is the singular $(k-1)$-simplex restriction of $ \sigma_{p_0, \dots, p_k}$ to $\Delta^{k-1}$ that acts as $ \sigma_{p_0, \dots, p_k}$ on the $i^{th}$ face of $ \Delta^{k}$.\\
Similarly we take $I \subseteq \{0,\dots, k\}$,  a singular $k$-simplex $ \sigma_{p_0, \dots, p_k}$ and we shall denote  $\sigma_{p_0, \dots, {\hat{p}}_I , \dots, p_k} $  the singular $(k-|I|)$-simplex restriction of $ \sigma_{p_0, \dots, p_k}$ to $\Delta^{k-|I|}$ that acts as $ \sigma_{p_0, \dots, p_k}$ on the subface of $ \Delta^{k}$ restricted by the indexes in $I$.
\end{no}

\begin{lem}\label{lem:boundary-straight}
Each boundary piece $ \sigma_{p_0, \dots, {\hat{p}}_i , \dots, p_k}$ of a straight $k$-simplex $\sigma_{p_0, \dots, p_k}$ is a straight $(k-1)$-simplex.
\end{lem}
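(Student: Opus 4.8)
The plan is to prove, by induction on $k$, the slightly sharper statement that for every choice of $p_0, \dots, p_k \in \mathbb{H}^n$ and every $i \in \{0, \dots, k\}$ the boundary piece $\sigma_{p_0, \dots, p_k} \circ F^i$ coincides, as a map $\Delta^{k-1} \to \mathbb{H}^n$, with the straight $(k-1)$-simplex $\sigma_{p_0, \dots, \hat p_i, \dots, p_k}$ of Definition \ref{defin:straight_simplexconstr} built on the indicated $k$ vertices. The base case $k=1$ is immediate: the two boundary pieces are $\sigma_{p_1}$ (for $i=0$) and $\sigma_{p_0}$ (for $i=1$), which are straight $0$-simplexes by the $0^{\mathrm{th}}$ step of the construction. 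For the inductive step recall that $\sigma_{p_0, \dots, p_k} = \tau_{p_k} \circ \exp \circ \Gamma$, where $\Gamma \colon \Delta^k \to \mathfrak{h}$ is the affine cone with apex at $0 \in \mathfrak{h}$, attained at the vertex $e_k$, over the base map $\bar\Gamma = \log \circ \tau_{p_k^{-1}} \circ \sigma_{p_0, \dots, p_{k-1}}$ defined on the face of $\Delta^k$ opposite $e_k$. One then splits into the cases $i = k$ and $i < k$.

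If $i = k$, then $F^k(\Delta^{k-1})$ is exactly the face of $\Delta^k$ opposite $e_k$, i.e.\ the base of the cone, on which $\Gamma$ restricts to $\bar\Gamma$. Hence, using that $\exp$ and $\log$ are mutually inverse (Subsection \ref{lefthor}) and that $\tau_{p_k} \circ \tau_{p_k^{-1}} = \mathrm{id}_{\mathbb{H}^n}$,
\[
\sigma_{p_0, \dots, p_k} \circ F^k = \tau_{p_k} \circ \exp \circ \log \circ \tau_{p_k^{-1}} \circ \sigma_{p_0, \dots, p_{k-1}} = \sigma_{p_0, \dots, p_{k-1}},
\]
which is a straight $(k-1)$-simplex by construction.

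If $i < k$, observe that the affine face map $F^i \colon \Delta^{k-1} \to \Delta^k$ sends the apex vertex $e_{k-1}$ of $\Delta^{k-1}$ to the apex vertex $e_k$ of $\Delta^k$, and carries the face of $\Delta^{k-1}$ opposite $e_{k-1}$ into the base of $\Gamma$, where (under the canonical identifications $e_j \leftrightarrow e_j$) it is the $i^{\mathrm{th}}$ face map of $\Delta^{k-1}$. Since the affine-cone parametrisation is compatible with affine maps sending apex to apex — it maps each segment from the apex to a base point linearly onto the segment from $0$ to the corresponding value of the base map — $\Gamma \circ F^i$ is again the affine cone with apex at $0$, now over $\bar\Gamma \circ F^i = \log \circ \tau_{p_k^{-1}} \circ \left( \sigma_{p_0, \dots, p_{k-1}} \circ F^i \right)$. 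By the inductive hypothesis $\sigma_{p_0, \dots, p_{k-1}} \circ F^i = \sigma_{p_0, \dots, \hat p_i, \dots, p_{k-1}}$, so $\Gamma \circ F^i$ is precisely the cone over $\log \circ \tau_{p_k^{-1}} \circ \sigma_{p_0, \dots, \hat p_i, \dots, p_{k-1}}$ appearing in the last step of the construction of the straight simplex $\sigma_{p_0, \dots, \hat p_i, \dots, p_k}$, whose last vertex is still $p_k$. Applying $\tau_{p_k} \circ \exp$ yields $\sigma_{p_0, \dots, p_k} \circ F^i = \sigma_{p_0, \dots, \hat p_i, \dots, p_k}$, completing the induction.

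The only point demanding genuine care is the compatibility invoked in the last paragraph, namely that precomposing the cone parametrisation $\Gamma$ with $F^i$ (for $i < k$) again produces a cone parametrisation with the same apex. This is a purely affine and combinatorial verification: $F^i$ is the unique order-preserving injection $\{0, \dots, k-1\} \hookrightarrow \{0, \dots, k\}$ omitting $i$, so for $i < k$ it respects the ``last vertex $=$ apex'' normalisation, and an affine map carries an affine cone onto the affine cone over the image of its base. Essentially nothing about the Heisenberg structure enters beyond the identities $\exp \circ \log = \mathrm{id}$ and $\tau_q^{-1} = \tau_{q^{-1}}$ already used above, so I expect this bookkeeping — and not any sub-Riemannian subtlety — to be the main obstacle.
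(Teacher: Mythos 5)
Your proof is correct and follows essentially the same route as the paper's: both arguments show that precomposing the cone extension $\Gamma$ with the (apex-preserving, affine) face map $F^i$ gives exactly the cone extension of $\bar\Gamma\circ F^i$, so that the boundary piece is itself an output of the construction in Definition \ref{defin:straight_simplexconstr}. Your write-up is only slightly more explicit than the paper's -- you separate the trivial case $i=k$ and spell out the affine-cone compatibility that the paper compresses into ``$\Gamma_\gamma:=\Gamma\circ F^i$ extends $\bar\Gamma_\gamma$ in the same way as $\Gamma$ extends $\bar\Gamma$'' -- but the underlying argument is the same.
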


\begin{proof}
Consider $
 \gamma^{(k-1)} =  \sigma_{p_0, \dots, p_k} \circ F^i$  where $  \sigma_{p_0, \dots, p_k}$ is a straight $k$-simplex. In order to prove that $
 \gamma^{(k-1)}: \Delta^{k-1} \to \mathbb{H}^n$ is  a straight simplex, we need to show that it can be constructed following Definition \ref{defin:straight_simplexconstr}. %\\
In this case its $0^{th}$ step is $k-1=0$ and thus we get $\gamma^{(0)} =   \sigma_{p_0, p_1} \circ F^i: \Delta^{0} = \{1\} \to \mathbb{H}^n$, with $i \in {0,1}$. $F^i$ sends $1$ to the vertex of $\Delta^{1}$ of index not $i$. Namely, for $i = 0$,
\begin{align*}
\gamma^{(0)} =   \sigma_{p_0, p_1} \circ F^0 :   \Delta^{0} &\to \Delta^{1} \quad \quad \quad \ \to \mathbb{H}^n\\
1 &\mapsto  (0,1) = e_1  \mapsto p_1
\end{align*}
or, for $i = 1$,
\begin{align*}
\gamma^{(0)} =   \sigma_{p_0, p_1} \circ F^1 :   \Delta^{0} &\to \Delta^{1} \quad \quad \quad \ \to \mathbb{H}^n\\
1 &\mapsto (1,0) =  e_0  \mapsto   p_0  .
\end{align*}
\noindent
For the $j^{th}$ step, we take %
$\gamma^{(j-1)} = \sigma_{p_0, \dots, p_{j-1}} \circ F^i: \Delta^{j-2} \to \Delta^{j-1} \to  \mathbb{H}^n$ with $i \in \{0,\dots,j-1\}$. %\\
We know that $ \gamma^{(j-1)} =  \sigma_{p_0, \dots, p_{j-1}} \circ F^i$ reaches the $j-1$ points $p_0, \dots, \hat p_i, \dots, p_{j-1}$, and we are going to add the point $ p_j$. %
We define
\begin{align*}
\bar \Gamma_\gamma  :& \ \Delta^{j-2} \to \mathfrak{h}\\
\bar \Gamma_\gamma :&= \log \circ \tau_{ p_j^{-1}} \circ  \gamma^{(j-1)}\\
&= \log \circ \tau_{ p_j^{-1}} \circ  \sigma_{p_0, \dots, p_{j-1}} \circ F^i  \\
&=  \bar \Gamma  \circ F^i 
\end{align*}
where $ \bar \Gamma $ comes from the $j^{th}$ step of the construction of $\sigma_{p_0, \dots, p_{k}}$.  %  being straight. %
From the same construction we also get the map  $ \Gamma :   \Delta^{j}  \to   \mathfrak{h}$ and we use it to define the continuous map
\begin{align*}
\Gamma_\gamma &:   \Delta^{j} \to   \mathfrak{h}\\
\Gamma_\gamma &: = \Gamma \circ F^i,
\end{align*}
which extends $\bar \Gamma_\gamma$  in the same way as $\Gamma$ extends $\bar \Gamma$. %
Finally we take %
$
\tau_{ p_i} \circ \exp \circ   \Gamma_\gamma  :  \Delta^{j} \to  \mathbb{H}^n 
$  %
and we see that
\begin{align*}
\tau_{ p_i} \circ \exp \circ   \Gamma_\gamma 
&=  \tau_{ p_i} \circ \exp \circ     \Gamma  \circ F^i   \\
&=  \sigma_{p_0, \dots, p_{j}}  \circ F^i \\
&=  \gamma^{(j)}
\end{align*}
So the construction is completed, $\gamma^{(k-1)} =  \sigma_{p_0, \dots, p_{k}}  \circ F^i $ is indeed a straight simplex and this completes the claim.
\end{proof}

\begin{defin}\label{defin:straightlayer}
Let $k \in \mathbb{N}$, denote $\Delta^k$ the standard $k$-simplex in $\mathbb{R}^{k+1}$ and consider a  singular $(k-1)$-simplex $ \sigma^{k-1} : \Delta^{k-1} \to \mathbb{H}^n $ with vertices $p_0, \dots, p_{k-1} \in \mathbb{H}^n$. By applying the $j^{th}$ step of Definition \ref{defin:straight_simplexconstr} to $ \sigma^{k-1}$ with a point $p_k \in \mathbb{H}^n $, we obtain a  singular $k$-simplexes 
$$
\sigma^{k} = \sigma_{p_0, \dots, p_{k}} : \Delta^{k} \to \mathbb{H}^n
$$
for which we say that the $k$-\emph{layer of} $\sigma_{p_0, \dots, p_{k}}$ \emph{is straight}.
\end{defin}

\noindent
Note that the crucial difference here is that a straight simplex has all of its layers straight by construction, while a singular simplex with one straight layer says nothing about the other layers.

\begin{defin}\label{defin:horizontal_simplexes}
Let $\Delta^k$ denote the standard $k$-simplex in $\mathbb{R}^{k+1}$ and consider a  singular $k$-simplex $\sigma^{k} $. We say that $\sigma^{k}$ is a \emph{horizontal} $k$\emph{-simplex} if its image is horizontal in $ \mathbb{H}^n$, meaning that the tangent vector fields are horizontal. %
Singular $k$-chains comprised of horizontal $k$-simplexes are called \emph{horizontal} $k$\emph{-chains}.
\end{defin}

%\begin{cor}
\noindent
Notice that %
the spaces of horizontal $k$-simplexes and horizontal $k$-chains are both invariant under dilation  $\delta_r$ and left translation.
%\end{cor}

\begin{defin}\label{defin:dilationgroup}
Consider $n  \in  \mathbb{N}$  and define the \emph{dilation group} (or \emph{group of dilations}) as
$$
G := \{ \delta_r :  \mathbb{H}^n \to  \mathbb{H}^n , \  r > 0  \}
$$
and the \emph{orbits of the dilation group} as
$$
G(p) := \{ \delta_r (p),   \  r > 0   \} \subseteq  \mathbb{H}^n, \ p \in \mathbb{H}^n.
$$
Furthermore, we say that  $p$ and $q$ are  \emph{on same orbit}, and we write $p \sim q $, if and only if
$$
  \exists r > 0 \text{ s.t. }   \delta_r (p) = q.
$$
This also mean that, if $p, q \neq 0$,
$$
p \in G(q)   \Longleftrightarrow   q \in G(p)   \Longleftrightarrow   p \sim q .
$$
\end{defin}

\begin{no}[see 2.1 in \cite{BKP}]
We denote by $PL(  \Delta^k, \mathbb{H}^n)$ the set of piecewise linear functions from $\Delta^k$ to $\mathbb{H}^n$, where $\mathbb{H}^n$ behaves as a vector space with respect to the exponential coordinates.
\end{no}

\noindent
In the proofs of Proposition \ref{lemma:lemma17} and Proposition \ref{lemma:lemma17-general} we will handle different $k$-simplexes and so use a notation to distinguish between their domains. %
For instance, $2$-simplexes of the kind $\sigma_{p_1, p_2}^h, \ \sigma_{ e, p_2}^h , \  \sigma_{e, p_1}^h $  %
will have domains written as    $ \Delta_{(p_1 , p_2 )}^1 , \  \Delta_{(p_2 , e )}^1 , \  \Delta_{(e , p_1 )}^1 $  %
instead of  $ \Delta^1$.

\begin{prop}[see Lemma 17 in \cite{BKP}]\label{lemma:lemma17}
Let $\Delta^k$ denote the standard $k$-simplex in $\mathbb{R}^{k+1}$ and $p_0, \dots, p_k \in \mathbb{H}^1$. %
There exist continuous maps
$$
\sigma^h : (\mathbb{H}^1)^{k+1} \to  PL( \Delta^k, \mathbb{H}^1 ), \quad k = 0,1,2, 3
$$
such that, for each $\sigma_{p_0, \dots, p_k}^h :  \Delta^k \to \mathbb{H}^1$,
\begin{enumerate}
\item
$\sigma_{p_0, \dots, p_k}^h$ is a singular simplex with vertices $p_0, \dots, p_k$,
\item
if $k\geq 1$, the faces and subfaces of $\sigma_{p_0, \dots, p_k}^h$ are  singular simplexes with vertices $p_0, \dots, \hat p_I, \dots, p_k$,  $I \subseteq \{ 0, \dots, k\}$. %,
\end{enumerate}
Moreover
\begin{enumerate}\setcounter{enumi}{2}
\item
if $k \geq 1$, for  $i_0, i_1 \in \{ 0, \dots, k\}$, $i_0 < i_1$, $\sigma_{p_{i_0}, p_{i_1}}^h$'s are horizontal piecewise linear curves.
\item
if $k \in \{2, 3\}$, for $2 \leq j \leq k$ and $i_0, \dots, i_j \in \{ 0,\dots, k \}  $, $i_0 < \dots < i_j $,
$\sigma_{p_{i_0}, \dots, p_{i_j}}^h$   is a linear combination of singular simplexes with straight $j$-layers.
\end{enumerate}
\end{prop}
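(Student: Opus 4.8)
The plan is to construct the maps $\sigma^h$ by induction on $k \in \{0,1,2,3\}$, using left translations and dilations to reduce to a normalized situation at each dimension, following subsections 2.1 and 5.4 of \cite{BKP}. For $k=0$ one sets $\sigma^h_{p_0}$ to be the constant map onto $p_0$, for which the four claims are vacuous or immediate and both equivariances are clear. The first substantive case is $k=1$: given $p_0,p_1$, I would use left-equivariance to reduce to building a horizontal piecewise linear curve $\eta_q$ from the origin to $q := p_0^{-1} * p_1 = (a,b,c)$, and then set $\sigma^h_{p_0,p_1} := \tau_{p_0} \circ \eta_q$. A straight segment issuing from the origin is horizontal exactly when it stays in the plane $\{t=0\}$, so a single segment cannot reach a point with $c \neq 0$; instead I would concatenate two horizontal segments through an intermediate vertex $(\alpha,\beta,0)$, horizontality of the second segment forcing the relation $\alpha b - \beta a = 2c$. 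Choosing $(\alpha,\beta)$ by a fixed linear recipe that scales as $(\alpha,\beta)\mapsto(r\alpha,r\beta)$ under $\delta_r$ makes $\eta_q$ dilation-equivariant, which gives claim (3) for $k=1$; the degenerate purely vertical case $(a,b)=(0,0)$, $c\neq 0$, where two segments enclose no signed area, must be handled separately by a three-segment loop enclosing the prescribed area.

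The inductive step from $k-1$ to $k$ (for $k=2,3$) is the heart of the argument. Assuming the lower simplexes $\sigma^h$ already built and satisfying (1)--(4), I would first cone the face opposite the new vertex from that vertex via the $k^{th}$ step of Definition \ref{defin:straight_simplexconstr}, producing a singular $k$-simplex with \emph{straight} $k$-layer whose bottom face is the previously constructed $\sigma^h$ on the remaining vertices. By Lemma \ref{lem:boundary-straight} the new faces created by this cone are themselves straight, so their edges are the \emph{straight} segments joining the apex to the base vertices, not the horizontal edges demanded by (3). The key device is then a family of \emph{correcting prisms}: between each straight edge and the corresponding horizontal edge joining the same pair of vertices I would interpose a $2$-chain, itself a linear combination of straight $2$-simplexes obtained by coning, whose algebraic boundary is the difference of the two edges up to the degenerate constant edges at the shared endpoints. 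Summing the cone with these prisms (and, for $k=3$, the analogous prisms that replace each straight $2$-face by the inductively constructed $\sigma^h$ on the corresponding three vertices) yields $\sigma^h_{p_0,\dots,p_k}$ as a linear combination of straight-$k$-layer simplexes, which is the content of (4), while the telescoping of prism boundaries leaves exactly the horizontal $1$-skeleton, giving (3).

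I would then verify (2) by observing that restricting $\sigma^h_{p_0,\dots,p_k}$ along the face map $F^i$ reproduces the cone-plus-prism data built from the vertex set with $p_i$ removed, that is, exactly $\sigma^h_{p_0,\dots,\hat p_i,\dots,p_k}$; iterating over indices gives the subface statement for all $I\subseteq\{0,\dots,k\}$. Equivariance under $\tau_q$ and $\delta_r$ propagates through the induction because every ingredient---$\log$, $\exp$, the affine cone in $\mathfrak{h}$, the horizontal edge recipe, and the prisms---intertwines with left translation and with the algebra dilation (linear on $\mathfrak{h}_1\oplus\mathfrak{h}_2$), so the composites are $\tau_q$- and $\delta_r$-equivariant. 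The main obstacle I anticipate is not a single computation but the simultaneous bookkeeping in the inductive step: choosing the correcting prisms so that their boundaries telescope precisely onto the horizontal $1$-skeleton and onto the lower-dimensional $\sigma^h$'s required by (2), while keeping every constituent simplex of straight top layer as in (4) and preserving both equivariances, together with dispatching the degenerate purely vertical edges that escape the generic two-segment construction.
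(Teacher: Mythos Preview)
Your inductive scheme for $k\geq 2$ differs from the paper's, and the difference is where the work lies. You cone from the \emph{new vertex} $p_k$ over the opposite face and then repair the resulting straight side-edges with ``correcting prisms''. The paper instead takes the exponential barycenter $q=\exp\bigl(\tfrac{1}{k+1}\sum_i\log p_i\bigr)$ and cones \emph{each} of the $k+1$ already-built boundary simplexes $\sigma^h_{p_0,\dots,\hat p_i,\dots,p_k}$ from $q$ via the $j^{th}$ step of Definition~\ref{defin:straight_simplexconstr}. Each such cone is a singular $k$-simplex with straight $k$-layer; any two of them share the straight cone over a common $(k-2)$-subface, so the $k+1$ pieces glue to a single PL map on $\Delta^k$ subdivided at its barycenter. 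The outer boundary is then \emph{automatically} the family of lower $\sigma^h$'s, so (2) and (3) are immediate and (4) holds because the map is literally the barycentric assembly of $k+1$ straight-$k$-layer simplexes. No correction step is needed.

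Your prism route has a gap relative to what the statement asks. The proposition requires a single map $\sigma^h_{p_0,\dots,p_k}\in PL(\Delta^k,\mathbb{H}^1)$ whose restriction along each $F^i$ is the lower $\sigma^h$. Your construction naturally produces a \emph{chain} whose algebraic boundary is the desired cycle of horizontal edges; turning that chain back into one PL map on $\Delta^k$ with the prescribed face restrictions is exactly the bookkeeping you flag as ``the main obstacle'', and it is not resolved. In particular you do not say from which point the prisms are coned (coning a curve from one of its own endpoints is degenerate), nor how the attached bigons are absorbed into a simplicial subdivision of $\Delta^k$ so that ``restriction along $F^i$'' even makes sense. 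For $k=1$, your two-segment recipe with $\alpha b-\beta a=2c$ cannot be made continuous in $(p_0,p_1)$ across the vertical locus $(a,b)=(0,0)$, $c\neq 0$, by merely switching to a three-segment path there; the paper sidesteps this by invoking Gromov's $h$-principle (3.4.B of \cite{GROMOV}) on a polyhedral fundamental domain for the dilation action and then extending by $\delta_r$- and $\tau_q$-equivariance, which yields a continuous family in one stroke.
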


\noindent
We can say that $\sigma^h$ is $\delta_r$-equivariant and $\tau_q$-equivariant in the sense that, chosen some points $p_0, \dots, p_k$, $\delta_r \circ \sigma_{p_0, \dots, p_k}^h $ and $\tau_q \circ \sigma_{p_0, \dots, p_k}^h $ still satisfy the four points.\\

\noindent
The following proof uses a contact argument % and the homotopy principle 
by Gromov (3.4.B of \cite{GROMOV}). % which derives from the homotopy principle for Legendre maps. 
A deeper reading on the topic can be found in 3.5 and 4.2 in \cite{GROMOV} or 3.4.3 in \cite{GROMOVPDR}, also by Gromov. %
Another reference on the subject of horizontal triangulations is Section 4 in \cite{YOUNG2013}, which shows two families of examples of groups with horizontal maps and triangulations.

\begin{proof}
$\\$
Points (1) and (2) of the proposition derive immediately by the methods used to prove the following points. %
If $k=0$, the claim is trivial. If $k\geq 1$, consider the point $p_1 \in \mathbb{H}^1$ and let $U \subseteq \mathbb{H}^1$ be a polyhedron containing the origin $e$ in its interior and intersecting each non-trivial orbit of the dilation group, as by Definition \ref{defin:dilationgroup}, exactly once. %\\
By %using the homotopy principle ($h$-principle) or by
applying 3.4.B in \cite{GROMOV} or Lemma 4.13 in \cite{YOUNG2013}, we can say that, there exists a function
\begin{align*}
\partial U &\to   PL(\Delta^1, \mathbb{H}^1 )\\
 p_1 &\mapsto    \sigma_{e, p_1}^h
\end{align*}
such that $ \sigma_{e, p_1}^h : \Delta^1 \to \mathbb{H}^1$ is a piecewise linear horizontal map that  joins $e$ to $p_1$. %
By dilating and left translating, we define a continuous map
\begin{align*}
\sigma^h :  \quad  (\mathbb{H}^1)^2 &\to  PL( \Delta^1, \mathbb{H}^1 ),\\
(p_0, p_1)&\mapsto  \sigma_{p_0, p_1}^h
\end{align*}
which completes the claim for $k=1$. 
For $k=2$, %
similarly we continue and take a different point $p_2 \in U$, $p_2 \not\in \partial U$, $p_2$ not on the same dilation group orbit as $p_1$ (possible thanks to the definition of $U$) and we get $  \sigma_{ p_1, p_2}^h,  \sigma_{e, p_2}^h \in PL(\Delta^1, \mathbb{H}^n )$ horizontal maps. This satisfies point (3) for $k=2$. %.\\
Notice that we can consider the map $\partial \Delta^2  \to  \mathbb{H}^1 $ defined  by  $ \sigma_{e, p_1}^h,  \sigma_{ p_1, p_2}^h,  \sigma_{e, p_2}^h$ along the border of $\Delta^2$:
$$
\sigma_{p_1, p_2}^h  -  \sigma_{ e, p_2}^h  +  \sigma_{e, p_1}^h : \Delta_{(p_1 , p_2 )}^1 \cup \Delta_{(p_2 , e )}^1 \cup \Delta_{(e , p_1 )}^1 =  \partial \Delta^2  \to  \mathbb{H}^1   %  \curvearrowright
$$
and this works perfectly as the border of a $2$-simplex. %\\
In order to construct such $2$-simplex, we take the exponential center of gravity 
$$
q = \exp \left (  \frac{1}{3} \left ( \log p_1 + \log p_2  \right ) \right )
$$ %
and we extend all three simplexes $ \sigma_{e, p_1}^h,  \sigma_{ p_1, p_2}^h,  \sigma_{e, p_2}^h$ % by one dimension 
following exactly the $j^{th}$ step of the construction in Definition \ref{defin:straight_simplexconstr}. %
Hence we obtain three singular $2$-simplexes with straight $2$-layers (see Definition \ref{defin:straightlayer}).
\begin{align*}
\sigma_{p_1, p_2, q}^h &: \Delta_{(p_1, p_2, q)}^2 \to \mathbb{H}^1,\\
\sigma_{e, p_2, q}^h &: \Delta_{(e, p_2, q)}^2 \to \mathbb{H}^1,\\
\sigma_{e, p_1, q}^h &: \Delta_{(e, p_1, q)}^2 \to \mathbb{H}^1.
\end{align*}
These three simplexes have the vertex $q$ in common and they all share, two by two, a second vertex as well. Since the construction is exactly the same for all of them, this means that they also perfectly touch on their new borders, meaning that $\sigma_{p_1, p_2, q}^h =  \sigma_{e, p_2, q}^h $ on the part of $\partial \Delta^2$ that joins $p_2$ to $q$, and same for the others.\\
This means that we can define a new $2$-simplex $: \Delta^2 \to \mathbb{H}^1$ where $ \Delta^2$ is divided in three triangles by its center of gravity $\left (\frac{1}{3}, \frac{1}{3}, \frac{1}{3} \right ) \in \Delta^2$ and each of these three triangles is sent to one  of the $2$-simplexes $\Delta_{(*,*,*)}^2$'s in a natural way, with  $\left (\frac{1}{3}, \frac{1}{3}, \frac{1}{3} \right )$ pointing at $q$. %
Hence we obtain a new map
$$
\sigma_{e, p_1, p_2}^h : \Delta^2 \to 
\Delta_{(p_1, p_2, q)}^2 \cup  \Delta_{(e, p_2, q)}^2 \cup  \Delta_{(e, p_1, q)}^2
\to \mathbb{H}^1
$$
which is a $2$-simplex with vertices $e, p_1, p_2$. %
By dilating and left translating, we define a continuous map
\begin{align*}
\sigma^h :  \quad \quad \ (\mathbb{H}^1)^3 &\to  PL( \Delta^2, \mathbb{H}^1 ),\\
(p_0, p_1, p_2)&\mapsto  \sigma_{p_0, p_1, p_2}^h
\end{align*}
which completes the claim for $k=2$. %
For $k=3$, one can work in the same way and so first obtain six horizontal singular $1$-simplexes of the kind  $ \sigma_{p_i, p_j}^h : \Delta^1 \to \mathbb{H}^1$, with $i, j \in \{0,1,2,3\}$, satisfying point (3) for $k=3$. %
Then we obtain four singular $2$-simplexes with straight $2$-layers: %
$ \sigma_{p_i, p_j, p_l}^h : \Delta^2 \to \mathbb{H}^1$, with $i, j, l \in \{0,1,2,3\}$. %
At this point one can see that the map   %
$$
\sigma_{p_1, p_2, p_3}^h  -  \sigma_{ p_0, p_2, p_3}^h  +  \sigma_{p_0, p_1, p_3}^h   -  \sigma_{ p_0, p_1, p_2}^h: %
\Delta_{(p_1 , p_2, p_3 )}^2 \cup \ldots \cup \Delta_{( p_0 , p_1, p_2 )}^2 =  \partial \Delta^3  \to  \mathbb{H}^1   %  \curvearrowright
$$
works perfectly as the border of a $3$-simplex. Again we build this  $3$-simplex by taking the center of gravity
$$
q = \exp \left (  \frac{1}{4} \left ( \log p_0 + \log p_1 + \log p_2 + \log p_3 \right ) \right )
$$ %
and by extending all four simplexes $ \sigma_{p_i, p_j, p_l, p_m}^h$ by one dimension following exactly the $j^{th}$ step of the construction in Definition \ref{defin:straight_simplexconstr}. %
Hence we obtain four singular $3$-simplexes with straight $3$-layers
\begin{align*}
\sigma_{p_1, p_2, p_3, q}^h &: \Delta_{(p_1, p_2, p_3, q)}^3 \to \mathbb{H}^1,\\
\sigma_{p_0, p_2, p_3, q}^h &: \Delta_{(p_0, p_2, p_3, q)}^3 \to \mathbb{H}^1,\\
\sigma_{p_0, p_1, p_3, q}^h &: \Delta_{(p_0, p_1, p_3, q)}^3 \to \mathbb{H}^1,\\
\sigma_{p_0, p_1, p_2, q}^h &: \Delta_{(p_0, p_1, p_2, q)}^3 \to \mathbb{H}^1.
\end{align*}
All these simplexes have the vertex $q$ in common and they all share, two by two, two other vertices as well. Since the construction is exactly the same for all of them, this means that they also perfectly touch on their new borders, meaning that $\sigma_{p_1, p_2, p_3, q}^h =  \sigma_{p_0, p_2, p_3, q}^h $ on the part of $\partial \Delta^3$ that joins $  \sigma_{ p_2, p_3}^h $ to $q$, and same for the others.\\
This means that we can define a new $3$-simplex $: \Delta^3 \to \mathbb{H}^1$ where $ \Delta^3$ is divided in four tetrahedra by its center of gravity $\bar q \in \Delta^3$ and each of these four tetrahedra is sent to one  of the $3$-simplexes $\Delta_{(*,*,*,*)}^3$'s in a natural way, with  $\bar q$ pointing at $q$. %
Hence we obtain a new map
$$
\sigma_{p_0, p_1, p_2,p_3}^h : \Delta^3 \to 
\Delta_{(p_1, p_2, p_3, q)}^3  \cup  \Delta_{(p_0, p_2, p_3, q)}^3  \cup  \Delta_{(p_0, p_1, p_3, q)}^3 \cup \Delta_{(p_0, p_1, p_2, q)}^3 
\to \mathbb{H}^1
$$
which is a $3$-simplex with vertices $p_0, p_1, p_2, p_3$.   %
By dilating and left translating, we define a continuous map
\begin{align*}
\sigma^h :  \quad \quad \ (\mathbb{H}^1)^4 &\to  PL( \Delta^3, \mathbb{H}^1 ),\\
(p_0, p_1, p_2, p_3)&\mapsto  \sigma_{p_0, p_1, p_2, p_3}^h
\end{align*}
which completes the claim for $k=3$ and finishes the proof. %
\end{proof}

\begin{prop}\label{lemma:lemma17-general}
Let $\Delta^k$ denote the standard $k$-simplex in $\mathbb{R}^{k+1}$. %
There exist continuous maps
$$
\sigma^h : (\mathbb{H}^n)^{k+1} \to  PL( \Delta^k, \mathbb{H}^n ), \quad k = 0,\dots ,2n+1
$$
such that, for each $\sigma_{p_0, \dots, p_k}^h :  \Delta^k \to \mathbb{H}^n$,
\begin{enumerate}
\item
$\sigma_{p_0, \dots, p_k}^h$ is a singular simplex with vertices $p_0, \dots, p_k \in \mathbb{H}^n$,
\item
if $1 \leq k \leq 2n+1$, the faces and subfaces of $\sigma_{p_0, \dots, p_k}^h$ are $\sigma_{p_0, \dots, \hat p_I, \dots, p_k}^h$'s,  $I \subseteq \{ 0, \dots, k\}$.
\end{enumerate}
Moreover
\begin{enumerate}\setcounter{enumi}{2}
\item
 if $1 \leq k \leq 2n+1$, for $1\leq j\leq n, k$ and  $i_0, \dots, i_j \in \{ 0, \dots, k\}$, $i_0 < \dots < i_j$, $\sigma_{p_{i_0}, \dots, p_{i_j}}^h$'s are horizontal piecewise linear maps,
\item
if $ n+1 \leq k \leq 2n+1$, for $n+1 \leq j \leq k$ and $i_0, \dots, i_j \in \{ 0,\dots, k \}  $,   $i_0 < \dots < i_j$, %
$\sigma_{p_{i_0}, \dots, p_{i_j}}^h$'s are linear combination of singular simplexes with straight $j$-layers.
\end{enumerate}
\end{prop}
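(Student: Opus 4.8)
The plan is to extend the argument that proves Proposition \ref{lemma:lemma17} (the case $n=1$) by induction on $k$, while tracking the one genuinely new feature: in $\mathbb{H}^n$ horizontality of a singular simplex can be maintained precisely up to dimension $n$ — the maximal dimension of an isotropic (horizontal) submanifold of the contact structure underlying $\mathbb{H}^n$ — whereas from dimension $n+1$ on one can only ask for straightness. First I would reduce to the case $p_0=e$ by decreeing
$\sigma^h_{p_0,\dots,p_k}:=\tau_{p_0}\circ\sigma^h_{e,\,p_0^{-1}*p_1,\dots,\,p_0^{-1}*p_k}$;
this makes $\tau_q$-equivariance automatic, and $\delta_r$-equivariance is then obtained exactly as in the proof of Proposition \ref{lemma:lemma17}, by constructing the $e$-based maps only for tuples whose non-origin vertices have been normalised through the polyhedron $U$ that meets each non-trivial dilation orbit once, and extending by $\delta_r$. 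Continuity of $\sigma^h$ then follows from the continuity of each step, and points (1)--(2) fall out of the construction below since the subdivisions involved fix the boundary of $\Delta^k$ pointwise.

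For the low-dimensional range $1\le k\le n$ I would construct horizontal piecewise linear simplexes by induction on $k$. For $k=1$ this is verbatim the argument of Proposition \ref{lemma:lemma17}: Gromov's $h$-principle (3.4.B of \cite{GROMOV}) gives, for $p_1\in\partial U$, a horizontal PL curve $\sigma^h_{e,p_1}$ joining $e$ to $p_1$, and one then dilates and left-translates. For $2\le k\le n$, given the horizontal PL map $\partial\Delta^k\to\mathbb{H}^n$ assembled from the faces $\sigma^h_{p_0,\dots,\hat p_i,\dots,p_k}$ already defined, I would again invoke the $h$-principle for horizontal (isotropic) maps — applicable because, the isotropy condition for $d\theta|_{\mathfrak h_1}$ being linear-algebraic and isotropic subspaces having dimension $\le n$, nondegenerate linear horizontal $k$-simplexes exist for $k\le n$ and the relation stays in the (sub)critical range — to fill it with a horizontal PL $k$-simplex $\sigma^h_{p_0,\dots,p_k}$, selected to depend continuously and naturally (with respect to reparametrisations) on the ordered boundary data, so that its restriction to the $i$-th face of $\Delta^k$ is the previously chosen $\sigma^h_{p_0,\dots,\hat p_i,\dots,p_k}$. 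This yields (3) for $1\le j\le\min(k,n)$.

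For the high-dimensional range $n+1\le k\le 2n+1$, including the ad hoc middle dimension $k=n+1$, I would cone off. Assume inductively that $\sigma^h$ is built in all dimensions $k'<k$ so that the faces of $\partial\Delta^k$ are linear combinations of singular simplexes with straight $(k-1)$-layers whenever $k-1\ge n+1$, while every sub-simplex spanned by at most $n$ original vertices is horizontal. Take the exponential centre of gravity $q:=\exp\!\bigl(\tfrac1{k+1}\sum_{i=0}^k\log p_i\bigr)$ and extend each $(k-1)$-face to $q$ by the $j$-th step of Definition \ref{defin:straight_simplexconstr} with $j=k$; by Lemma \ref{lem:boundary-straight} and the fact that this coning recipe is identical for all $k+1$ faces, the resulting $k+1$ singular $k$-simplexes with straight $k$-layers agree on the new internal faces and glue to a single map on the subdivided $\Delta^k$, which I call $\sigma^h_{p_0,\dots,p_k}$. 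Since the subdivision fixes $\partial\Delta^k$, property (2) and the persistence of horizontality/straightness on faces spanned by original vertices are immediate, giving (3) and (4). For $k=n+1$ this is precisely the step where the (only horizontal) boundary is filled by something that is merely straight; for $n+2\le k\le 2n+1$ one cones off already-straight faces.

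The main obstacle is the $h$-principle step in dimensions $2\le k\le n$: one needs a filling operator that is simultaneously continuous in the boundary data, equivariant under $\delta_r$ and $\tau_q$, and \emph{strictly} compatible with faces and subfaces — restricting the filling of $\partial\Delta^k$ to a face must reproduce the filling already chosen for the corresponding lower boundary, not merely something homotopic to it. This calls for the parametric and relative forms of Gromov's $h$-principle for isotropic maps, applied fibrewise over the space of ordered tuples after the reduction to $U$, plus the bookkeeping that makes the family genuinely continuous and the gluings exact. The coning steps ($k\ge n+1$), by contrast, are explicit, and their compatibility is a direct consequence of Lemma \ref{lem:boundary-straight} together with the equivariance of the exponential centre of gravity in the normalised ($e$-based, orbit-transversal) setting.
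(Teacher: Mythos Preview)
Your proposal is correct and follows essentially the same route as the paper: invoke Gromov's $h$-principle (3.4.B of \cite{GROMOV}) together with dilation and left translation to produce the horizontal PL simplexes in dimensions $\le n$, then cone each boundary face to the exponential centre of gravity via the $j$-th step of Definition \ref{defin:straight_simplexconstr} to obtain the straight higher layers. Your write-up is in fact more explicit than the paper's about the need for the relative/parametric $h$-principle to secure strict face compatibility (the paper compresses this into a single sentence), and you organise the high-dimensional induction on $k$ rather than on the sub-dimension $j$, but these are cosmetic differences.
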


\noindent
As in Proposition \ref{lemma:lemma17}, we can say that $\sigma^h$ is $\delta_r$-equivariant and $\tau_q$-equivariant in the sense that, chosen some points $p_0, \dots, p_k$, $\delta_r \circ \sigma_{p_0, \dots, p_k}^h $ and $\tau_q \circ \sigma_{p_0, \dots, p_k}^h $ still satisfy the four points.

\begin{proof}
$\\$
Points (1) and (2) of the proposition derive immediately by the methods used to prove the following points. %
If $k=0$, the claim is trivial. If $k\geq 1$, %
by %using the homotopy principle ($h$-principle) or by
applying 3.4.B in \cite{GROMOV} or Lemma 4.13 in \cite{YOUNG2013}, and by dilating and left translating, we can say that, there exist a continuous map
\begin{align*}
\sigma^h :  \quad  (\mathbb{H}^n)^{k+1} &\to  PL( \Delta^k, \mathbb{H}^n ),\\
(p_0,\dots, p_k) &\mapsto  \sigma_{p_0,\dots, p_k}^h
\end{align*}
such that each face and subface $ \sigma_{p_{i_0}, \dots, p_{i_j}}^h : \Delta^j \to \mathbb{H}^n$, with    $j\leq n, k$ and  $i_0,\dots i_j \in \{ 0, \dots, k\}$, is a piecewise linear horizontal map with vertices $p_{i_0}, \dots, p_{i_j}$, %. \\%
which %give us the claim for $k \leq n$ and 
satisfies point (3)   for $ 1 \leq k \leq  2n+1$. In particular this completes the proof for $k \leq n$.  %\\%
For $ n+1 \leq k \leq  2n+1$,  this proves point (3) and we are left to verify point (4). %
In order to do so,   %we fix $k$, $ n+1 \leq k \leq  2n+1$, and use induction on $j$,   $n+1 \leq j\leq k$.  %  
we start with the case $k=n+1$, meaning also $j=n+1$. % and then prove by induction. %
Among the simplexes we just created, we consider the  ${k+1 \choose j}={n+2 \choose n+1} = n+2$ horizontal maps
$\sigma_{p_0, \dots, {\hat{p}}_i , \dots, p_{n+1}}^h \in PL(\Delta^n, \mathbb{H}^n)$, with $i \in \{0,\dots,n+1\}$ where we exclude one point with the hat notation $\ \hat{} \ $ . %\\
Notice that we can consider the map $\partial \Delta^{n+1}  \to  \mathbb{H}^n $ defined  by  %$ \sigma_{e, p_1}^h,  \sigma_{ p_1, p_{n+1}}^h,  \sigma_{e, p_{n+1}}^h$ 
these $n+2$ maps along the border of $\Delta^{n+1}$:
$$
\sum_{i=0}^{n+1} (-1)^i  \sigma_{p_0, \dots, {\hat{p}}_i , \dots, p_{n+1}}^h : \bigcup\limits_{i=0}^{n+1} \Delta_{(p_0, \dots, {\hat{p}}_i , \dots, p_{n+1})}^{n}  =  \partial \Delta^{n+1}  \to  \mathbb{H}^n
$$
and this works perfectly as the border of a $(n+1)$-simplex according to Notation \ref{notation:hat-boundary}.  % \\
In order to construct such $(n+1)$-simplex, we take the exponential center of gravity 
$$
q = \exp \left (  \frac{1}{n+2} \sum_{i=0}^{n+1}  \log p_{i}   \right )
$$ %
and we extend all $n+2$ singular $n$-simplexes $\sigma_{p_0, \dots, {\hat{p}}_i , \dots, p_{n+1}}^h$'s  by one dimension to $n+2$ singular $(n+1)$-simplexes following % exactly 
the $j^{th}$ step of the construction in Definition \ref{defin:straight_simplexconstr}. Hence we obtain $n+2$ singular $(n+1)$-simplexes %
whose $(n+1)$-layer is straight:
$$
\sigma_{p_0, \dots, {\hat{p}}_i , \dots, p_{n+1}, q}^h :   \Delta^{n+1}_{p_0, \dots, {\hat{p}}_i , \dots, p_{n+1}, q}  \to  \mathbb{H}^n
$$
All these simplexes have the vertex $q$ in common and they all share, two by two, $n$ other vertices as well. Since the construction is exactly the same for all of them, this means that they also perfectly touch and coincide on their new borders.\\
%: $\sigma_{p_1, p_2, q}^h =  \sigma_{e, p_2, q}^h $ on the part of $\partial \Delta^2$ that joins $p_2$ to $q$, and same for the others.\\
%
This means that we can define a new singular $(n+1)$-simplex $: \Delta^{n+1} \to \mathbb{H}^n$ where $ \Delta^{n+1}$ is divided in $n+2$ tetrahedra by its center of gravity in $ \bar q \in \Delta^{n+1}$ and each of those tetrahedra is sent to one  of the $n+2$ singular $(n+1)$-simplexes $ \Delta^{n+1}_{p_0, \dots, {\hat{p}}_i , \dots, p_{n+1}, q}$'s   %$\Delta_{(*,\dots,*)}^2$'s
 in a natural way, with  $\bar q  \in \Delta^{n+1}$ pointing at $q \in \mathbb{H}^n$. %
Hence we obtain a new map
$$
\sigma_{p_0, \dots, p_{n+1}}^h  :
 \Delta^{n+1}   \to
\bigcup\limits_{i=0}^n \Delta_{(p_0, \dots, {\hat{p}}_i , \dots, p_{n+1}, q)}^{n+1} 
\to \mathbb{H}^n
$$
which is a $(n+1)$-simplex with vertices $p_0,  \dots, p_{n+1}$ and which is a linear combination of singular $(n+1)$-simplexes with straight $(n+1)$-layer.    %
By dilating and left translating, we define a continuous map
\begin{align*}
\sigma^h :  \quad \quad \ (\mathbb{H}^n)^{n+2} &\to  PL( \Delta^{n+1}, \mathbb{H}^n ),\\
(p_0, \dots, p_{n+1})&\mapsto  \sigma_{p_0, \dots, p_{n+1}}^h 
\end{align*}
which completes the claim for $k=n+1$.  %\\
To verify now point (4) for $ n+1 < k \leq  2n+1$ and  $n+1 \leq j\leq k$, one can proceed  in exactly the same manner as we did in the case $k=n+1$ and by induction on $j$,  $n+1 \leq j\leq k$. The base case is then $j=n+1$ and then one can proceed in this manner: first consider the ${k+1 \choose j } = {k+1 \choose n+1 }$ horizontal maps whose linear combination becomes the border of our simplex-to-be. Then extend those $n$-simplexes into $(n+1)$-simplexes adding as point the exponential center of gravity of the current points and following the $j^{th}$ step of the construction in Definition \ref{defin:straight_simplexconstr}. The linear combination of these singular $(n+1)$-simplexes is then a new $(n+1)$-simplex with straight $(n+1)$-layer.  \\%
The main case with $j$ general, supposes that we built straight layers on our simplexes up to dimension $j-1$. In this case one proceed virtually in an identical way: first consider the ${k+1 \choose j } $ singular $(j-1)$-simplexes with straight $(j-1)$-layer whose linear combination becomes the border or our simplex-to-be. Then extend those $(j-1)$-simplexes into $j$-simplexes adding as point the exponential center of gravity of the current points and following the $j^{th}$ step of the construction in Definition \ref{defin:straight_simplexconstr}. The linear combination of these $j$-simplexes is then the new $j$-simplex %with straight $j$-layer.  %
%This  % 
that %
completes the claim.
\end{proof}

\subsection{Regular triangulations of %a cube in
 $\mathbb{H}^{n}$}\label{subsec:triangulation-Hn}
%\todo[ backgroundcolor=white, textcolor=blue, inline]{Description}
% \setcounter{theor}{9}

In this subsection we give a notion of triangulation for polyhedrons % polyhedral simplexes 
and, subsequently, for the whole space as well. %
We start considering a polyhedron $Q$ with vertices in $\{0,1\}^k$, which is the same as saying that we index its vertices by strings in $\{0,1\}^k$, and then we move to a more general case, defining a notion of triangulation and showing that we can use simplexes from Subsetion \ref{subsec:straight-horizontal} to triangulate a polyhedron and then the entire space.

\begin{lem}\label{lemma:injective}
Let $k \in \mathbb{N}$ and denote $\Delta^k$ the standard $k$\emph{-simplex} in $\mathbb{R}^{k+1}$. % and let . %
Any injective map %
$$
s: \{ 0,\dots, k\} \to \{0,1\}^k
$$ 
defines a  %
singular $k$-simplex $$\sigma_s : \Delta^k \to Q, $$ %[0,1]^k
%
%where: %
such that $\sigma_s (e_j) =s(j)$   for all  $ j \in \{ 0,\dots, k\} $ and where $Q \subseteq \mathbb{H}^{n}$ is a polyhedron with vertices in $\{0,1\}^k$. %
In particular this means that the vertices of $\sigma_s ( \Delta^k)$ coincide with vertices of $Q$.  %\\
Moreover, we can take  $\sigma_s$ to be a straight  $k$-simplex.
\end{lem}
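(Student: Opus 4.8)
The plan is to exhibit $\sigma_s$ explicitly as a straight simplex built from Definition \ref{defin:straight_simplexconstr}, so that Lemma \ref{lem:straight-singular} immediately yields singularity. Given the injective map $s:\{0,\dots,k\}\to\{0,1\}^k$, set $p_j := s(j) \in \{0,1\}^k \subseteq Q \subseteq \mathbb{H}^n$ for $j=0,\dots,k$; these are $k+1$ distinct points of $\mathbb{H}^n$ since $s$ is injective. I would then simply \emph{define} $\sigma_s := \sigma_{p_0,\dots,p_k}$, the straight $k$-simplex with these vertices, whose existence and construction is guaranteed by Definition \ref{defin:straight_simplexconstr}.

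The main things to check are then: (a) $\sigma_s$ is a well-defined singular $k$-simplex, (b) $\sigma_s(e_j) = s(j)$ for all $j$, and (c) the image $\sigma_s(\Delta^k)$ lies inside $Q$. For (a), straight simplexes are continuous by construction, hence singular by Lemma \ref{lem:straight-singular}. For (b), I would trace through the iterative construction: the $0^{th}$ step sends $e_0 \mapsto p_0$, and the remark following Definition \ref{defin:straight_simplexconstr} records that at the $j^{th}$ step the image $\sigma_{p_0,\dots,p_j}(\Delta^j)$ has vertices $p_0,\dots,p_j$, with $e_l \mapsto p_l$ preserved from earlier steps (since $\Gamma$ extends $\bar\Gamma$ on the cone and agrees with it on the base $\Delta^{j-1}$, and $\tau_{p_j}\circ\exp\circ\log\circ\tau_{p_j^{-1}} = \mathrm{id}$ recovers the old values). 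So $\sigma_s(e_j) = p_j = s(j)$, which in particular shows the vertices of $\sigma_s(\Delta^k)$ are among the vertices of $Q$.

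The point requiring the most care is (c): that the whole image $\sigma_s(\Delta^k)$, not merely its vertices, stays within the polyhedron $Q$. Here I would argue that at each step the cone $\mathcal{C}$ built in part (2) of the construction has vertices $0, \log(p_j^{-1}*p_0),\dots,\log(p_j^{-1}*p_{j-1})$, i.e.\ it is the convex hull (in $\mathfrak{h}\cong\mathbb{R}^{2n+1}$) of the logarithms of the $\tau_{p_j^{-1}}$-translated vertices; applying $\tau_{p_j}\circ\exp$ therefore produces, in exponential coordinates, a set whose points are ``linear'' combinations of $p_0,\dots,p_j$ in the sense that $\mathbb{H}^n$ behaves as a vector space under exponential coordinates (as in the definition of $PL(\Delta^k,\mathbb{H}^n)$). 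Since $Q$ is a polyhedral simplex with vertices in $\{0,1\}^k$ — hence convex in those coordinates — and all $p_j$ are vertices of $Q$, convexity gives $\sigma_s(\Delta^k)\subseteq Q$. I expect verifying this containment (and pinning down precisely in which coordinate system the straight simplex is convex, given the recentering by $\tau_{p_j^{-1}}$ at each step) to be the genuine obstacle; the rest is bookkeeping on the iterative definition.
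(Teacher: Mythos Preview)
Your approach is correct and close in spirit to the paper's, but the paper takes a slightly shorter route: it first exhibits $\sigma_s$ as the \emph{affine} map sending each $e_j$ to $s(j)$ (for which continuity and containment in the convex polyhedron $Q$ are immediate), and only then remarks that one may alternatively run Definition~\ref{defin:straight_simplexconstr} to obtain the straight $k$-simplex $\sigma_{s(0),\dots,s(k)}$. The paper does not re-verify containment in $Q$ for the straight version; indeed, the remark following the lemma singles out the affine choice as the one giving $\sigma_s(\Delta^k)\subseteq[0,1]^k$.

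Your anticipated obstacle in (c) actually dissolves: in exponential coordinates $\exp$ and $\log$ are the identity, and for fixed $p$ the left translation $\tau_p(q)=p*q$ is \emph{affine} in $q$ (the $t$-coordinate $p_t+q_t+\tfrac{1}{2}\sum_j(p_j q_{n+j}-p_{n+j}q_j)$ is linear in $q$). From this one checks $\tau_{p_j}(s\cdot q)=s\,\tau_{p_j}(q)+(1-s)\,p_j$, so each step of Definition~\ref{defin:straight_simplexconstr} is a Euclidean coning with apex $p_j$, and inductively the image $\sigma_{p_0,\dots,p_k}(\Delta^k)$ is exactly the Euclidean convex hull of $p_0,\dots,p_k$, hence contained in $Q$. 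So your single-construction route works and in fact shows the straight and affine simplexes have the same image; the paper simply sidesteps this computation by invoking the affine map first.
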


\begin{proof}
We can take $\sigma_s$ to be the affine map that maps the $k+1$ vertices of the simplex $ \Delta^k$ into the image of $s$, which is comprised by vertices of $Q$, %$[0,1]^k$,
 continuously. This gives us a singular simplex. \\
Moreover, since $\sigma_s$ is defined starting from its vertices, we can also construct it following Definition \ref{defin:straight_simplexconstr} and obtain $\sigma_s$ to be a straight $k$-simplex  $\sigma_{s(0), \dots, s(k)}$.
\end{proof}

\noindent
Note that in Lemma \ref{lemma:injective}, if we take $\sigma_s$ to be the affine map,  we get that $\sigma_s ( \Delta^k)$ is contained in the unit $k$-cube %
%$Q=[0,1]^k$: 
$[0,1]^k$:
$\sigma_s ( \Delta^k) \subseteq [0,1]^k$.   %
A polyhedron $Q$ covered by simplexes can be called a polyhedral simplex.\\  %

\noindent
In the next corollary, we generalise the set $\{0,1\}^k$ to a set of $2^k$ elements.

\begin{cor}\label{cor:injective-points}
Let $k \in \mathbb{N}$ and denote $\Delta^k$ the standard $k$\emph{-simplex} in $\mathbb{R}^{k+1}$. %
Consider $K \subseteq  \mathbb{H}^n$ a set containing $2^k$ points in  $ \mathbb{H}^n$. %
Any injective map %
$$
s: \{ 0,\dots, k\} \to K, %  \{  p_0, \dots, p_k \},
$$ 
defines a   singular $k$-simplex  
$$
\sigma_s : \Delta^k \to Q_K, % \mathbb{H}^n,
$$
such that $\sigma_s (e_j) =s(j)$   for all  $ j \in \{ 0,\dots, k\} $ and where $Q_K \subseteq \mathbb{H}^{n}$ is a polyhedron % polyhedral simplex 
with vertices in $K$. %
In particular this means that the vertices of $\sigma_s ( \Delta^k)$ are $s(0), \dots, s(k)$ and coincide with vertices of $Q_K$.  %\\
Moreover, we can take  $\sigma_s$ to be a straight  $k$-simplex.
\end{cor}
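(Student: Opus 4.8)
The plan is to mirror the proof of Lemma~\ref{lemma:injective} almost verbatim: that argument never used any special feature of the vertex set $\{0,1\}^k$ beyond the fact that it is a finite set of points spanning a polyhedral simplex, so it applies equally to any finite point set $K \subseteq \mathbb{H}^n$. First I would fix what $Q_K$ means: regarding $\mathbb{H}^n$ as the vector space $\mathbb{R}^{2n+1}$ in exponential coordinates, let $Q_K$ be the polyhedral simplex spanned by $K$, i.e.\ the convex hull of the $2^k$ points of $K$ (if those points are not in general position this hull may be lower-dimensional, but that causes no harm below).

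Next, given the injective map $s : \{0,\dots,k\} \to K$, I would define $\sigma_s : \Delta^k \to \mathbb{H}^n$ to be the affine map (in exponential coordinates) determined by $\sigma_s(e_j) = s(j)$ for $j = 0,\dots,k$. This map is continuous, hence a singular $k$-simplex by Definition~\ref{defin:singular-simplex}. Its image is the convex hull of $\{s(0),\dots,s(k)\} \subseteq K$, which is a sub-polyhedron (a face, when $K$ is in general position) of $Q_K$; hence $\sigma_s$ takes values in $Q_K$, so $\sigma_s : \Delta^k \to Q_K$ as required, and by construction the vertices of $\sigma_s(\Delta^k)$ are exactly $s(0),\dots,s(k)$, which are among the vertices of $Q_K$.

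For the ``moreover'' assertion I would argue as in Lemma~\ref{lemma:injective}: since $\sigma_s$ is pinned down by its vertices $s(0),\dots,s(k)$, one may instead feed these $k+1$ points into the iterative recipe of Definition~\ref{defin:straight_simplexconstr}, producing the straight $k$-simplex $\sigma_{s(0),\dots,s(k)}$; by Lemma~\ref{lem:straight-singular} this is again a singular $k$-simplex, and the construction guarantees $\sigma_{s(0),\dots,s(k)}(e_j) = s(j)$. Replacing $\sigma_s$ by $\sigma_{s(0),\dots,s(k)}$ then proves the last sentence.

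I do not expect a serious obstacle here; the only point needing a word of care is the degenerate case in which the chosen vertices $s(0),\dots,s(k)$ are affinely dependent (possible precisely because $K$ is an arbitrary point set), so that $\sigma_s(\Delta^k)$ is a degenerate simplex — but this is still a perfectly good singular simplex, so the statement is unaffected. A secondary subtlety, also inherited from Lemma~\ref{lemma:injective}, is that for the \emph{straight} representative the inclusion of the image into $Q_K$ is no longer automatic from convexity; one either checks that the $\exp$/$\log$ cone construction stays inside a suitably chosen polyhedron attached to $K$, or simply reads ``$\sigma_s : \Delta^k \to Q_K$'' as recording the vertices rather than a literal set containment, exactly as in the cube case.
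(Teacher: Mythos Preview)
Your proposal is correct and follows essentially the same approach as the paper: the paper's proof is literally the one-line remark that it is ``virtually identical to the one for Lemma~\ref{lemma:injective}, just considering the vertices of $Q_K$ instead of $Q$,'' and you have unpacked exactly that, taking the affine map for the singular case and the construction of Definition~\ref{defin:straight_simplexconstr} for the straight case. Your additional remarks on degenerate vertex configurations and on whether the straight simplex literally lands in $Q_K$ go beyond what the paper addresses, but they do not change the argument.
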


\noindent
The proof of  Corollary \ref{cor:injective-points} is virtually identical to the one for  Lemma \ref{lemma:injective}, just considering the vertices of $Q_K$ instead of $Q$.

\begin{defin}
Let $k \in \mathbb{N}$. %
We say that, for all  $x,y \in \{0,1\}^k$,
$$
x \leq y \quad \text{if and only if} \quad x_i \leq y_i  \text{ for all } i = 0,\dots, k .
$$
This is a partial order on strings and we denote $I$ the set of increasing functions $s$ of the kind $s: \{ 0,\dots, k\} \to \{0,1\}^k$. %\\
Likewise, consider $2^k$ points in  $ \mathbb{H}^n$ % such that they are the vertices of a $k$-dimensional cube %
%AS USUAL, I DO NOT TAKE THE CUBE ITSELF, ONLY THE POINTS.
 and group them into the set $K$. %
We say that, for all  $x,y \in K$,
$$
x \leq y \quad \text{if and only if} \quad x_i \leq y_i  \text{ for all } i = 1,\dots, 2n+1 .
$$
This is a partial order on points of $\mathbb{H}^n$ and we denote $I_K$ the set of increasing functions $s$ of the kind $s: \{ 0,\dots, k\} \to K$.
\end{defin}

\noindent
Note that, if $K = \{0,1\}^{2n+1}$ and we associate strings in $\{0,1\}^{2n+1}$ to the corresponding points in $ \mathbb{H}^n$,  the two partial orders are the same.

\begin{defin}[see 2.2 in \cite{BKP}]
Consider a  singular $k$-simplex $\sigma_s : \Delta^k \to Q_K %[0,1]^k
$ coming from an injective map $s$ as above. The orientation of $\sigma_s$  is defined as: % denoted $(-1)^s$ and is given by
$$
(-1)^s:= % sgn_{s}:=
sign \left ( \det % \left (  
\begin{bmatrix}
          s(1)-s(0) \\
           \vdots \\
           s(k)-s(0)
\end{bmatrix}
% \right ) 
\right ).
$$
\end{defin}

\begin{defin}[see 2.2 in \cite{BKP}]\label{defin:triangulationQK}
%Consider $ Q_K \subseteq \mathbb{H}^n$ no
The set of singular $k$-simplexes $\sigma_s  : \Delta^k \to Q_K $,  %[0,1]^k$,
such that % with 
$s \in I_K$, establishes a \emph{triangulation of} $Q_K$. % $[0,1]^k$.  %
Furthermore, we denote a  \emph{singular $k$-chain of the triangulation} by
$$
\tau := \sum_{s \in I} (-1)^s \sigma_{s}.
$$
Similarly, by taking straight $k$-simplexes $\sigma_s : \Delta^k \to Q_K$, such that $s \in I_K$, we get a \emph{straight triangulation of} $Q_K %[0,1]^k
$ and  %
a \emph{straight $k$-chain of the triangulation}.
\end{defin}

\noindent
Note that singular $k$-simplexes $\sigma_s$'s comprised in a triangulation do not overlap each other except that at the border because they are built following the same construction, whether as affine maps, straight simplexes or else. %\\
This also means that the border of $\tau$, which is a linear combination of the borders of its simplexes, lives on the border of $Q_K$, % $[0,1]^k$, 
since the internal borders of the simplexes will cancel each others out:
$$
\partial \tau = \sum_{s \in I} (-1)^s  \partial \sigma_{s}=
\sum_{s \in I} (-1)^s   \sum_{i=0}^{k} (-1)^i  \sigma_{s} \circ F^i.
$$

\noindent
Furthermore note that, since the singular simplexes are continuous and by construction, this definition of triangulation is consistent with, for example, the one in Section 2.2 in \cite{YOUNG} and Section 1.3.2 in \cite{ABB}.

\noindent
After Definition \ref{defin:triangulationQK}, one can see that $k!$ simplexes are needed to triangulate a $k$-dimensional cube, meaning that $|I| = k!$.  %
Moreover, recalling Definition \ref{defin:boundary-simplex} about the border of a simplex, one can say that the faces (or subfaces of any dimension) of $Q_K$ % $[0,1]^k$ 
are naturally triangulated by the boundary pieces of the higher-dimensional simplexes belonging to the triangulation of the cube. \\%
Last, we can generalise the triangulation to the whole $\mathbb{H}^n$ via general polyhedrons. %  polyhedral simplexes. %
For example,  we could triangulate the grid of cubes $ Q_{\epsilon p, \epsilon}$'s from Section \ref{section:example-grid} by using singular $k$-simplexes as affine maps, which provides $\mathbb{H}^n$ with a triangulation into singular simplexes for the whole space, albeit with very little regularity (not straight nor horizontal).  %\\ %
% Note that we did not yet ask any straight/horizontal property on the triangulation just yet, so it's easy for now.
% 
Another example would be to triangulate polyhedrons % polyhedral simplexes 
with vertices in $\mathbb{Z}^{2n+1}$ as straight simplexes using Definition  \ref{defin:straight_simplexconstr}, which would give us a triangulation of straight  simplexes for the whole space  $\mathbb{H}^n$ (but with no horizontality for low dimensions).  %
In the following proposition we propose a yet different kind of triangulation.

\begin{prop}\label{prop:triangulation-heisenberg}
Let $\Delta^k$ denote the standard $k$-simplex in $\mathbb{R}^{k+1}$. %
There exists a triangulation of the Heisenberg group  $\mathbb{H}^n$ composed of singular simplexes 
$$
\sigma^k  : \Delta^k \to \mathbb{H}^n , \quad k = 0,\dots ,2n+1
$$
 such that
\begin{enumerate}
\item
for $0<k \leq n$, $\sigma^k$'s are horizontal piecewise linear maps.
%\item
%for $k=n+1$, $\sigma^k$ is a linear combination of singular simplexes with the $(n+1)$-layer straight.
\item
for $ n+1 \leq k \leq 2n+1$,  $\sigma^k$'s are singular simplexes with straight $k$-layer.
\end{enumerate}
\end{prop}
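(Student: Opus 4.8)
The strategy is to combine the local construction of Proposition \ref{lemma:lemma17-general} with the triangulation machinery of Definition \ref{defin:triangulationQK}, globalising via the cube grid of Section \ref{section:example-grid}. First I would fix the standard grid of unit cubes on $\mathbb{H}^n = \mathbb{R}^{2n+1}$, i.e. the cubes $Q_{p,1}$ with $p \in \mathbb{Z}^{2n+1}$, which cover $\mathbb{H}^n$ and meet only along faces. On each such cube $Q = Q_{p,1}$, its $2^{2n+1}$ vertices carry the componentwise partial order, and I would consider the set $I_K$ of increasing injections $s : \{0,\dots,2n+1\} \to \{\text{vertices of }Q\}$; there are $(2n+1)!$ of these, and they index the simplices of the standard (Kuhn/Freudenthal) triangulation of $Q$. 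The key point is that this combinatorial triangulation is \emph{compatible across adjacent cubes}: on a shared face of two neighbouring cubes, the induced triangulations of that face agree, because both are determined by the same order-theoretic rule applied to the (shared) vertices of that face.

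The heart of the argument is to realise each combinatorial simplex $s \in I_K$ not by an affine map but by the map $\sigma^h_{s(0),\dots,s(k)}$ produced by Proposition \ref{lemma:lemma17-general}, applied to the $k$-dimensional faces of $Q$ for every $k$ simultaneously. Concretely, for each cube $Q$ and each $s \in I_K$ I set $\sigma^k := \sigma^h_{s(0),\dots,s(2n+1)}$, and more generally its boundary faces and subfaces are the $\sigma^h_{s(i_0),\dots,s(i_j)}$'s by point (2) of that proposition. By point (3) these are horizontal piecewise linear maps whenever the face dimension $j$ satisfies $1 \le j \le n$, which gives conclusion (1); by point (4), for $n+1 \le j \le 2n+1$ they are linear combinations of singular simplices with straight $j$-layer, which gives conclusion (2). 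The $\tau_q$-equivariance of $\sigma^h$ is what lets me use \emph{the same} map $\sigma^h$ on every cube: the cube $Q_{p,1}$ is the left translate — in exponential coordinates, a Euclidean shift — of $Q_{0,1}$, so the simplices built on $Q_{p,1}$ are left translates of those built on $Q_{0,1}$, and since left translation preserves horizontality and straightness of layers, the regularity conclusions propagate to every cube.

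The step I expect to be the main obstacle is verifying that the pieces \emph{glue into an honest triangulation of $\mathbb{H}^n$} — that is, that on the common face of two adjacent cubes $Q_{p,1}$ and $Q_{p',1}$ the simplices induced from either side literally coincide as maps $\Delta^j \to \mathbb{H}^n$, not merely have the same image. This requires two things: first, that the combinatorial face-triangulations match (the order-theoretic observation above), and second, that the construction $\sigma^h$ restricted to a face depends only on the ordered tuple of vertices of that face and not on the ambient cube — which is exactly the content of point (2) of Proposition \ref{lemma:lemma17-general} together with $\tau_q$-equivariance, since the shared face, viewed from either cube, is a translate of the same standard face and the boundary operator commutes with the construction. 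Once compatibility on shared faces is established, the borders of the chains $\tau_Q = \sum_{s \in I_K}(-1)^s \sigma_s$ cancel across the grid, and the union $\bigcup_{p \in \mathbb{Z}^{2n+1}} Q_{p,1} = \mathbb{H}^n$ yields the desired triangulation. I would close by noting that the $\delta_r$-equivariance, while not needed for existence, shows the triangulation can be rescaled to arbitrarily fine meshes, and that the simplices of a single cube do not overlap except along boundaries because they arise from the single construction $\sigma^h$ applied consistently, exactly as in the remark following Definition \ref{defin:triangulationQK}.
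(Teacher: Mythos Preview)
Your approach matches the paper's own (apply Proposition~\ref{lemma:lemma17-general} to ordered vertex tuples from the $\mathbb{Z}^{2n+1}$ grid and assemble via the Freudenthal-type triangulation of Definition~\ref{defin:triangulationQK}); the paper's proof is two sentences, and you have supplied the details it leaves implicit, including the face-compatibility check. One correction: Heisenberg left translation $\tau_q$ is \emph{not} a Euclidean shift in exponential coordinates (the $t$-coordinate twists), so the cubes $Q_{p,1}$ are Euclidean translates but not $\tau_q$-translates of $Q_{0,1}$ --- this does not damage your argument, since the regularity in (1)--(2) is delivered by Proposition~\ref{lemma:lemma17-general} for \emph{any} tuple of points, and face compatibility across cubes follows from its point~(2) alone (the restriction of $\sigma^h$ to a face depends only on that face's ordered vertex tuple); the $\tau_q$-equivariance is superfluous here.
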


\begin{proof}
Thanks to Proposition \ref{lemma:lemma17-general} we have families of suitable simplexes. By grouping them into triangulations of polyhedrons starting from a grid of vertices in $\mathbb{Z}^{2n+1}$, we get a triangulation of the whole space.
%In theory we would want to approach the relationship between straight and $\mathbb{H}$-regularity, but that seems to be totally another battle.
\end{proof}

\noindent
In Appendix \ref{appendix:extra-example} we show two basic examples, triangulating the square and the cube in $\mathbb{H}^1$.

%-----------------------------------------------------------------------------------------------------------------------------------------------------
%---------------------------------------------------------------APPENDIX-------------------------------------------------------------------------
%-----------------------------------------------------------------------------------------------------------------------------------------------------

%\setcounter{section}{0}
\renewcommand{\thesection}{\Alph{section}}
\section{Appendix: examples of triangulations}\label{appendix:extra-example}

\begin{ex}\label{ex:k=2}
Consider $n=1$. If $k=2$ and $K =  \{ (x,y,0) \ / \ x,y \in  \{0,1 \}^2 \}  \approx \{0,1 \}^2$, %
 we have exactly two increasing functions of the kind %
 $s: \{ 0,1,2\} \to \{0,1\}^2 \approx K \subseteq Q$, i.e., $I=\{ s_1, s_2 \}$:
\begin{align*}
s_1: \{ 0,1,2\} &\to \{0,1\}^2              & s_2: \{ 0,1,2\} &\to \{0,1\}^2   \\
0 &\mapsto  (0,0),                                  & 0 &\mapsto (0,0)  \\
1 &\mapsto  (1,0),                                  & 1 &\mapsto (0,1) ,   \\
2 & \mapsto (1,1),                                  & 2 &\mapsto (1,1).
\end{align*}
This is not a surprise, since one needs two triangles to triangulate a square. From $s_1$ and $s_2$, we get two singular $2$-simplexes of the kind $\sigma : \Delta^2 \to % [0,1]^2 =
 Q$:
\begin{align*}
\sigma_{ s_1 } : \quad \quad \quad \quad \ \Delta^2 &\to Q ,                       & \sigma_{ s_2 } : \quad \quad \quad \quad \ \Delta^2 &\to Q,\\
e_0=(1,0,0) &\mapsto (0,0,0) ,                                 & e_0=(1,0,0) &\mapsto  (0,0,0),           \\
e_1=(0,1,0) & \mapsto  (1,0,0) ,                               & e_1=(0,1,0) & \mapsto (0,1,0),              \\
e_2=(0,0,1)   & \mapsto (1,1,0) ,                              & e_2=(0,0,1)   & \mapsto (1,1,0).               
\end{align*}
The singular 2-chain of the triangulation is then
$$
\tau = (-1)^{s_1} \sigma_{{s_1}} + (-1)^{s_2} \sigma_{{s_2}} = \sigma_{{s_1}} - \sigma_{{s_2}}
$$
since
\begin{align*}
(-1)^{s_1} = sign
\begin{vmatrix}
s_1(1)-s_1(0)  \\
s_1(2)-s_1(0)
\end{vmatrix}
=sign
\begin{vmatrix}
1&0  \\
1&1
\end{vmatrix}
= +1
\end{align*}
and
\begin{align*}
(-1)^{s_2} = sign
\begin{vmatrix}
s_2(1)-s_2(0)  \\
s_2(2)-s_2(0)
\end{vmatrix}
=sign
\begin{vmatrix}
0&1  \\
1&1
\end{vmatrix}
= -1.
\end{align*}
So the orientation of the two linear simplexes is one the opposite of the other.  %
Finally the border of this triangulation is
\begin{align*}
\partial \tau = \partial \sigma_{s_1} - \partial \sigma_{s_2} &= 
 \sigma_{s_1} \circ F^0
-  \sigma_{s_1} \circ F^1
+  \sigma_{s_1} \circ F^2
-  \sigma_{s_2} \circ F^0
+  \sigma_{s_2} \circ F^1
-  \sigma_{s_2} \circ F^2\\
&=
 \sigma_{s_1} \circ F^0
+  \sigma_{s_1} \circ F^2
-  \sigma_{s_2} \circ F^0
-  \sigma_{s_2} \circ F^2
\end{align*}
because $  \sigma_{s_1} \circ F^1 =  \sigma_{s_2} \circ F^1 $. We see that $\partial \tau $ triangulates the border of the square with one simplex on each face.  % 

Consider $n=1$ with $k=3$, we have a total of six maps in $I$ of the kind $s: \{ 0,1,2,3\} \to \{0,1\}^3 \subseteq Q$, i.e., $I=\{ s_1 ,\ s_2 ,\  s_3 ,\  s_4 ,\  s_5 ,\  s_6 \}$:
\begin{align*}
s_1: \{ 0,1,2,3\} &\to \{0,1\}^3              & s_2: \{ 0,1,2,3\} &\to \{0,1\}^3              & s_3: \{ 0,1,2,3\} &\to \{0,1\}^3   \\
 0 &\mapsto (0,0,0),               & 0 &\mapsto (0,0,0),               & 0 &\mapsto (0,0,0),  \\
 1 &\mapsto (1,0,0),               & 1 &\mapsto (1,0,0),               & 1 &\mapsto (0,1,0),    \\
 2 &\mapsto (1,1,0),               & 2 &\mapsto (1,0,1),               & 2 &\mapsto (0,1,1),    \\
 3 &\mapsto (1,1,1),               & 3 &\mapsto (1,1,1),               & 3 &\mapsto (1,1,1), 
\end{align*}
\begin{align*}
s_4: \{ 0,1,2,3\} &\to \{0,1\}^3              & s_5: \{ 0,1,2,3\} &\to \{0,1\}^3              & s_6: \{ 0,1,2,3\} &\to \{0,1\}^3   \\
 0 &\mapsto (0,0,0),               & 0 &\mapsto (0,0,0),               & 0 &\mapsto (0,0,0),  \\
 1 &\mapsto (0,1,0),               & 1 &\mapsto (0,0,1),               & 1 &\mapsto (0,0,1),    \\
 2 &\mapsto (1,1,0),               & 2 &\mapsto (1,0,1),               & 2 &\mapsto (0,1,1),    \\
 3 &\mapsto (1,1,1),               & 3 &\mapsto (1,1,1),               & 3 &\mapsto (1,1,1). 
\end{align*}
This matches our intuition that one needs six tetrahedra to triangulate the cube. From these functions, we get six singular $3$-simplexes of the kind $\sigma : \Delta^3 \to % [0,1]^3 =
 Q$:
\begin{align*}
\sigma_{ s_1 } : \Delta^3 &\to Q ,      & \sigma_{ s_2 } : \Delta^3 &\to Q,       & \sigma_{ s_3 } : \Delta^3 &\to Q, \\
e_0   & \mapsto (0,0,0),               & e_0   & \mapsto (0,0,0),               & e_0   & \mapsto (0,0,0),  \\        
e_1   & \mapsto (1,0,0),               & e_1   & \mapsto (1,0,0),               & e_1   & \mapsto (0,1,0),  \\
e_2   & \mapsto (1,1,0),               & e_2   & \mapsto (1,0,1),               & e_2   & \mapsto (0,1,1),  \\
e_3   & \mapsto (1,1,1),               & e_3   & \mapsto (1,1,1),               & e_3   & \mapsto (1,1,1),
\end{align*}
\begin{align*}
\sigma_{ s_4 } : \Delta^3 &\to Q,      & \sigma_{ s_5 } : \Delta^3 &\to Q,       & \sigma_{ s_6 } : \Delta^3 &\to Q, \\
e_0   & \mapsto (0,0,0),               & e_0   & \mapsto (0,0,0),               & e_0   & \mapsto (0,0,0),  \\        
e_1   & \mapsto (0,1,0),               & e_1   & \mapsto (0,0,1),               & e_1   & \mapsto (0,0,1),  \\
e_2   & \mapsto (1,1,0),               & e_2   & \mapsto (1,0,1),               & e_2   & \mapsto (0,1,1),  \\
e_3   & \mapsto (1,1,1),               & e_3   & \mapsto (1,1,1),               & e_3   & \mapsto (1,1,1),
\end{align*}
The singular 3-chain of the triangulation is then
$$
\tau = \sum_{i=1}^6 (-1)^{s_i}  \sigma_{s_i} = \sigma_{s_1} -  \sigma_{s_2} +  \sigma_{s_3} -  \sigma_{s_4} +  \sigma_{s_5} -  \sigma_{s_6} 
$$
since
\begin{align*}
(-1)^{s_1} = sign
\begin{vmatrix}
s_1(1)- s_1(0)  \\
s_1(2)- s_1 (0)  \\
s_1(3)- s_1 (0)
\end{vmatrix}
=sign
\begin{vmatrix}
1&0&0  \\
1&1&0  \\
1&1&1
\end{vmatrix}
= +1,
\end{align*}
\begin{align*}
(-1)^{s_2} = sign
\begin{vmatrix}
s_2(1)- s_2(0)  \\
s_2(2)- s_2 (0)  \\
s_2(3)- s_2 (0)
\end{vmatrix}
=sign
\begin{vmatrix}
1&0&0  \\
1&0&1  \\
1&1&1
\end{vmatrix}
= -1
\end{align*}
and so on. %
The border of this triangulation is 
\begin{align*}
\partial \tau = \sum_{i=1}^6 (-1)^{s_i}  \sigma_{s_i} =& 
\partial \sigma_{s_1} - \partial \sigma_{s_2} + \partial \sigma_{s_3} - \partial \sigma_{s_4} + \partial \sigma_{s_5} - \partial \sigma_{s_6} \\
=& +
 \sigma_{s_1} \circ F^0
-  \sigma_{s_1} \circ F^1
+  \sigma_{s_1} \circ F^2
-  \sigma_{s_1} \circ F^3\\
&
-  \sigma_{s_2} \circ F^0
+  \sigma_{s_2} \circ F^1
-  \sigma_{s_2} \circ F^2
+  \sigma_{s_2} \circ F^3\\
&+ 
 \sigma_{s_3} \circ F^0
-  \sigma_{s_3} \circ F^1
+  \sigma_{s_3} \circ F^2
-  \sigma_{s_3} \circ F^3\\
&
-  \sigma_{s_4} \circ F^0
+  \sigma_{s_4} \circ F^1
-  \sigma_{s_4} \circ F^2
-  \sigma_{s_4} \circ F^3\\
&+ 
 \sigma_{s_5} \circ F^0
-  \sigma_{s_5} \circ F^1
+  \sigma_{s_5} \circ F^2
-  \sigma_{s_5} \circ F^3\\
&
-  \sigma_{s_6} \circ F^0
+  \sigma_{s_6} \circ F^1
-  \sigma_{s_6} \circ F^2
+  \sigma_{s_6} \circ F^3\\
=& +
 \sigma_{s_1} \circ F^0
-  \sigma_{s_1} \circ F^3
-  \sigma_{s_2} \circ F^0
+  \sigma_{s_2} \circ F^3\\
&+ 
 \sigma_{s_3} \circ F^0
-  \sigma_{s_3} \circ F^3
-  \sigma_{s_4} \circ F^0
+  \sigma_{s_4} \circ F^3\\
&+ 
 \sigma_{s_5} \circ F^0
-  \sigma_{s_5} \circ F^3
-  \sigma_{s_6} \circ F^0
+  \sigma_{s_6} \circ F^3
\end{align*}
because
$$
\begin{cases}
\sigma_{s_1} \circ F^2 = \sigma_{s_2} \circ F^2,\\
\sigma_{s_1} \circ F^1 = \sigma_{s_4} \circ F^1,\\
\sigma_{s_2} \circ F^1 = \sigma_{s_5} \circ F^1,\\
\sigma_{s_3} \circ F^2 = \sigma_{s_4} \circ F^2,\\
\sigma_{s_3} \circ F^1 = \sigma_{s_6} \circ F^1,\\
\sigma_{s_5} \circ F^2 = \sigma_{s_6} \circ F^2.
\end{cases}
$$
We see that $\partial \tau $ triangulates the border of the cube with two simplexes on each face.
\end{ex}

%-----------------------------------------------------------------------------------------------------------------------------------------------------
%-----------------------------------------------------------------------------------------------------------------------------------------------------
%-----------------------------------------------------------------------------------------------------------------------------------------------------

\section*{Declarations}
\noindent
This work was partially supported by a grant from the Vilho, Yrj\"{o} and Kalle V\"{a}is\"{a}l\"{a} Fund.

%-------------------------------------------------------------------------------------------------------------------------------------------
%---------------------------------------------------------BIBLIOGRAPHY----------------------------------------------------------------
%-------------------------------------------------------------------------------------------------------------------------------------------

%\bibliography{Bibliography_220204} %no .bib extensions
\bibliography{GC_manuscript}

\begin{thebibliography}{10}

\bibitem{ABB}
A.~Agrachev, D.~Barilari, and U.~Boscain.
\newblock {\em A Comprehensive Introduction to Sub-Riemannian Geometry}.
\newblock Cambridge University Press, 2019.

\bibitem{BKP}
Z.~M. Balogh, A.~Kozhevnikov, and P.~Pansu.
\newblock {H{\"o}lder Maps from Euclidean Spaces to Carnot Groups}.
\newblock preprint, 2017.

\bibitem{BB}
S.~Biagi and A.~Bonfiglioli.
\newblock {\em An Introduction to the Geometrical Analysis of Vector Fields:
  with Applications to Maximum Principles and Lie Groups}.
\newblock World Scientific Publishing Co. Pte. Ltd., 2019.

\bibitem{GCslicing}
G.~Canarecci.
\newblock {Sub-Riemannian Currents and Slicing of Currents in the Heisenberg
  group $\mathbb{H}^n$}.
\newblock {\em The Journal of Geometric Analysis}, 31:5166--5200, 2020.

\bibitem{CDPT}
L.~Capogna, D.~Danielli, S.~D. Pauls, and J.~T. Tyson.
\newblock {\em {An Introduction to the Heisenberg Group and the Sub-Riemannian
  Isoperimetric Problem}}.
\newblock Birkh{\"a}user Verlag AG, Basel - Boston - Berlin, 2007.

\bibitem{CG}
L.~Corwin and F.~P. Greenleaf.
\newblock {\em {Representations of nilpotent Lie groups and their applications.
  Part I: Basic theory and examples}}.
\newblock Cambridge University Press, Cambridge and New York, 1990.

\bibitem{ECHLPT}
D.~B.~A. Epstein, J.~W. Cannon, D.~F. Holt, S.~V.~F. Levy, M.~S. Paterson, and
  W.~P. Thurston.
\newblock {\em {Word Processing in Groups}}.
\newblock Taylor \& Francis Group, Boca Raton, London, New York, 1992.

\bibitem{FSSC}
B.~Franchi, R.~Serapioni, and F.~{Serra Cassano}.
\newblock {Regular Submanifolds, Graphs and Area Formula in Heisenberg Groups}.
\newblock {\em Advances in Mathematics}, 211(1):152--203, 2007.

\bibitem{GROMOVPDR}
M.~Gromov.
\newblock {\em {Partial Differential Relations}}.
\newblock Springer-Verlag, Berlin and Heidelberg, 1986.

\bibitem{GROMOV}
M.~Gromov.
\newblock {Carnot-Carath{\'e}odory Spaces Seen from Within}.
\newblock {\em Progress in Mathematics}, 144:79--323, 1996.

\bibitem{LEEtopological}
J.~M. Lee.
\newblock {\em {Introduction to Topological Manifolds}}.
\newblock Springer, New York, Dordrecht, Heidelberg, London, 2011.

\bibitem{MAU}
C.~R.~F. Maunder.
\newblock {\em {Algebraic Topology}}.
\newblock Cambridge University Press, Cambridge, 1980.

\bibitem{PANSU}
P.~Pansu.
\newblock {M{\'e}triques de Carnot--Carath{\'e}odory et Quasiisom{\'e}tries des
  Espaces Sym{\'e}triques de Rang Un}.
\newblock {\em Annals of Mathematics}, 129(1):1--60, 1989.

\bibitem{YOUNG2013}
R.~Young.
\newblock {Filling inequalities for nilpotent groups through approximation}.
\newblock {\em Groups, Geometry and Dynamics (EMS)}, (7):977--1011, 2013.

\bibitem{YOUNG}
R.~Young.
\newblock {High-Dimensional Fillings in Heisenberg Groups}.
\newblock {\em The Journal of Geometric Analysis}, 26(2):1596--1616, 2016.

\end{thebibliography}
\bibliographystyle{abbrv}

\end{document}